\documentclass[a4paper,11pt,reqno]{amsart}


\usepackage{
amsfonts,
amsmath,
amsopn,
amssymb,
amsthm,
bbm,
bbold,
dsfont,
enumitem,
graphicx,
mathrsfs,
mathtools,
mathabx,
soul,
subfig,
verbatim,
xcolor,
xspace,
latexsym
}

\usepackage{geometry}
\geometry{hmargin={3cm,3cm}}
\geometry{vmargin={3cm,3cm}}

\usepackage[hidelinks]{hyperref}
\usepackage[utf8]{inputenc}

\usepackage{rotating}

\usepackage{tikz}
\usetikzlibrary{decorations.pathreplacing}
\usetikzlibrary{shapes.geometric,calc,decorations.markings,math}
\tikzstyle{nodo}=[circle,draw,fill,inner sep=0pt,minimum size=%
1.5mm]
\tikzstyle{infinito}=[circle,inner sep=0pt,minimum size=0mm]

\mathtoolsset{showonlyrefs}


\newtheorem{theorem}{Theorem}[section]

\newtheorem{proposition}[theorem]{Proposition}
\newtheorem{corollary}[theorem]{Corollary}

\theoremstyle{remark}
\newtheorem{remark}[theorem]{Remark}
\newtheorem*{remark*}{Remark}

\theoremstyle{definition}
\newtheorem{definition}[theorem]{Definition}


%

%

\newcommand{\R}{\mathbb{R}}

\newcommand{\C}{\mathbb{C}}

\newcommand{\J}{\mathcal H}
\newcommand{\ju} {\mathcal J}
\newcommand{\D}{\mathcal{D}}

\newcommand{\I}{\mathcal{I}}

\newcommand{\E}{\mathcal{E}}

\renewcommand{\Re}{\mathrm{Re}}
\renewcommand{\leq}{\leqslant}
\renewcommand{\geq}{\geqslant}

\newcommand{\x}{\mathbf{x}}

\newcommand{\ov}[1]{\overline{#1}}



\title[NLS on a hybrid domain]{The search for NLS ground states on a hybrid domain: \\ Motivations, methods, and results. }

\author[R. Adami]{Riccardo Adami}
\address{R. ADAMI: Politecnico di Torino, Dipartimento di Scienze Matematiche ``G.L. Lagrange'', Corso Duca degli Abruzzi, 24, 10129, Torino, Italy}
\email{riccardo.adami@polito.it}

\author[F. Boni]{Filippo Boni}
\address{F. BONI: Scuola Superiore Meridionale, Largo S. Marcellino, 10, 80138, Napoli, Italy}
\email{f.boni@ssmeridionale.it}

\author[R. Carlone]{Raffaele Carlone}
\address{R. CARLONE: Università degli Studi di Napoli Federico II, Dipartimento di Matematica e Applicazioni ``Renato Caccioppoli”, Via Cintia, Monte S. Angelo, 80126, Napoli, Italy}
\email{raffaele.carlone@unina.it}

\author[L. Tentarelli]{Lorenzo Tentarelli}
\address{L. TENTARELLI: Politecnico di Torino, Dipartimento di Scienze Matematiche ``G.L. Lagrange'', Corso Duca degli Abruzzi, 24, 10129, Torino, Italy}
\email{lorenzo.tentarelli@polito.it}

\date{\today}


\begin{document}


\begin{abstract}
We discuss the problem of establishing the existence of the Ground States for the subcritical focusing Nonlinear Schr\"odinger energy on a domain made of a line and a plane intersecting at a point. The problem is physically motivated by the experimental
realization of hybrid traps for Bose-Einstein Condensates, that are able
to concentrate the system on structures close to the domain we consider. In fact, such a domain approximates the trap as the temperature approaches the absolute zero. The spirit of the paper is mainly pedagogical, so we focus on the formulation of the problem and on the explanation of the result, giving references
for the technical points and for the proofs.
\end{abstract}

\maketitle

\vspace{-.5cm}
\noindent {\footnotesize \textul{AMS Subject Classification:} 35R02, 81Q35, 35Q55, 35Q40, 35B07, 35B09, 35R99}

\noindent {\footnotesize \textul{Keywords:} hybrids, standing waves, nonlinear Schr\"odinger, ground states, delta interaction.}


\section{Introduction}
Modeling Bose-Einstein Condensation  has been a major challenge in the
Mathematical Physics of the last decades. The phenomenon consists in a phase transition undergone by systems made of a large number, i.e. around $10^4$, of identical bosons at low temperature, namely $10^{-7}$ K and below. In the new phase all bosons of the system share the same quantum state called Ground State of the condensate, and such state occupies all the available space. Notice that the fact that every boson possesses a quantum state is not a 
natural property: indeed, even assuming that the whole system lies in a quantum state,  its parts may not. In the mathematical description, this means that a wave function of the whole system may not be the product of single-particle wave functions. This is what happens in the presence of entanglement among the particles. Therefore, in the transition to a Bose-Eintein condensate every particle 
acquires an individual quantum state, i.e. 
bosons in a Bose-Einstein condensate lying in its Ground State  are  not entangled to one another.

Such a new physical phase was  first foreseen one hundred years ago \cite{B-24,E-24}
and experimentally realized seventy years later
\cite{CW-95,K-95}, when the techinques of laser and 
evaporative cooling made  possible to reach the necessary
temperature. Since then, the mathematical investigation on condensates has become topical and
has involved several approaches: operator theory, nonlinear evolution equation, calculus of
variation, analysis on Fock spaces and the formalism of second quantization. An important part of
such a research consisted in singling out the conditions under which the so-called
Gross-Pitaevskii regime (\cite{B-47,G-61,P-63}) holds. In such a regime it is 
possible to approximate the fundamental linear $N$-body dynamics of the bosons with an
effective, nonlinear one-body dynamics: if, on the one hand, the problem becomes more difficult
for the presence of the nonlinearity, on the other hand there is a great simplification in passing
from an $N$-body to a one-body problem, and the simplification greatly overcomes the
additional difficulty. 

Let us be more specific. First, a quantum mechanical description of
the $N$-boson system is made through the $N$-boson wave function $\Psi_{N,t} (x_1, \dots x_N)$, that represents
the quantum state of the system at time $t$. By
$x_j$ we denote the three-dimensional coordinate of the $j$th boson. The function $\Psi_{N,t}$ is symmetric under permutation of its variables, as it describes the state of $N$
identical bosons. The evolution of the system is described by the $N$-body Schr\"odinger
equation
\begin{equation}
    \label{Nbody}
i \partial_t \Psi_{N,t} \ = \ - \sum_{j=1}^N \Delta_j 
\Psi_{N,t} + \sum_{1 \leq i < j \leq N} w (x_i - x_j ) 
\Psi_{N,t} + \sum_{j=1}^N V (x_j)
\Psi_{N,t},
\end{equation}
where $w$ represents the two-body interaction between the bosons and $V$ is the potential 
of an external field whose role is typically to confine the system.

The already mentioned Gross-Pitaevskii regime is the physical setting in which 
the dynamics of a Bose-Einstein condensate is effectively described by the one-body nonlinear Gross-Pitaevskii equation 
\begin{equation} \label{gptrap}
i \partial_t \psi_t = - \Delta \psi_t + V(x) \psi_t + 8 \pi \alpha | \psi_t |^2 \psi_t,
\end{equation}
where
$\alpha$ is the scattering length of the two-body interaction $w$ between the particles in the condensate. It turns then out that the nonlinearity is the mark of the interaction between the bosons that are the elementary constituents of the condensate. Of course, the problem of
deducing \eqref{gptrap} from \eqref{Nbody} 
is of paramount importance in understanding the underlying physics. It is nowadays
well understood that the transition to the Gross-Pitaevskii regime happens if the interacting
potential scales with respect to the number $N$ of bosons as
\begin{equation}
    \label{scaling}
w (x) \to N^2 w (Nx)
\end{equation}
and if the initial data fulfil at least approximately a bosonic Stosszahlansatz, namely $\Psi_{N,0} = \psi_0^{\otimes N}$. The scaling law \eqref{scaling}
means that the Gross-Pitaevskii regime is a good approximation of the fundamental dynamics \eqref{Nbody} if the interacting potential is strong, due to the factor 
$N^2$, and concentrated, as expressed by the argument $Nx$ that
shrinks the range of the potential to a scale $N^{-1}$. The Stosszahlansatz hypothesis states in turn that the correlations between 
the particles are small, which is more likely the case if
the system is dilute. Notice that at the level of the dynamics
the process that makes the system uncorrelated is not described, as one has to impose the Stosszahlansatz on the initial data. Under such hypothesis, one has that in the 
Gross-Pitaevskii regime
every particle possesses an individual state, so 
particles are not entangled with one another, and that at every time $t$ such a state is represented by a 
complex-valued function $\psi_t$ to be interpreted according the usual Born's rule: the modulus square of  $\psi_t$ gives the probability density of finding one particle of the condensate in a specified spatial region. Such interpretation imposes $\psi_t$ to be normalized at $1$, but in fact it is often preferable to normalize
it to the number $N$ of particles in the condensate,
so that
$$ \int_{\R^3} | \psi_t|^2 \, dx \ = \ N ,$$
thus $| \psi_t |^2$ is better understood as a particle density rather than as a probability density.


Besides their relevance in the physical
understanding of the phenomenon,
the rigorous proofs 
of Bose-Einstein condensation in the static framework (\cite{SeiringerMerda}) and of the stability of the condensates under time evolution (\cite{AGT-07,ESY-07,ESY-10,KSS-11}) greatly
enriched the mathematical techniques employed in the derivation of nonlinear one-body equations from linear $N$-body  quantum dynamics of interacting particles. In particular,
the scaling \eqref{scaling} is highly singular, 
 so
a main breakthrough was accomplished as it was understood
how to rigorously deal with it.

Later, a major challenge was to get a good estimate
of the error made in approximating the fundamental linear
$N$-body description with the effective nonlinear one-body evolution equation (\cite{RS-07,KP-10,BdOS-15,NN-17,BBCS-20}).

In the following we focus on the existence and  the shape of the Ground State for a condensate in
the Gross-Pitaevskii regime under the action of a 
trap whose shape can be modeled as the union of a 
plane and a line orthogonal to it. 
Since the Bose-Einstein condensation occurs at low energy, it does not involve
neither the creation nor the annihilation of particles, therefore
the natural notion of Ground State is that of the minimizer of the energy functional $E_{GP}$ whose value is
conserved by the flow of the Gross-Pitaevskii equation, under the
constraint that the number of particles, i.e. the $L^2$-norm, is
fixed. In the following the $L^2$-norm will be called mass and will be denoted  by $\mu$.  We will refer to the prescription that it has to be a fixed quantity as to the mass constraint.

We then get to the problem of finding the minimizers of the Gross-Pitaevskii energy
\begin{equation} \label{energycondensate}
E_{GP} (\psi) \ = \ \frac 1 2 \| \nabla \psi \|_{L^2(\R^3)}^2 + 2 \pi \alpha
\| \psi \|_{L^4(\R^3)}^4 + \frac 12 \int_{\R^3} V(x) | \psi (x) |^2 \, dx
\end{equation}
with the constraint
$$ \int_{\R^3} |\psi (x) |^2 \, dx = \mu,$$
provided they exist.

Usually the potential $V$ is referred to as the {trap}. Of course, the actual shape of a condensate depends on its particular state, thus
 it has become customary to think of the spatial distribution of the Ground State as the shape of the trap.

Early condensates (\cite{CW-95,K-95}) were subject to an external harmonic and isotropic potential, so that their shape was approximately spherical, but the subsequent impressive technological advances in magnetic and optical confinement made it possible to build up condensates with various shapes: disc-shaped, cigar-shaped, branched, and others.

We assume that the trapping potential $V$ is so effective that the Ground State of the condensate can be approximated as supported in a region $\Omega \subset \R^3$, so that
\begin{equation} \label{constrainedenergy}
E (\psi) \ = \ \frac 1 2 \| \nabla \psi \|_{L^2(\Omega)}^2+ 2 \pi \alpha
\| \psi \|_{L^4(\Omega)}^4.
\end{equation}
Clearly, the Ground State $\psi_t$
evolves in time 
just by periodically changing its phase, namely
$$
\psi_t (x) \ = \ e^{i \omega t} \psi_0 (x),
$$
so that its spatial profile and therefore the associated density
of particles remain unaltered. It is a stationary state.

\subsection{Hybrid traps}
As already mentioned, we are interested in describing an experimental arrangement in which the trap has the shape of a plane with a line attached to it, as in Fig. \ref{fig-hyb}. This structure results from 
gluing together two components of different dimensionality and will be referred to as the hybrid structure $\J$. Traps of this kind
have been realized by employing a magnetic trap together with an optical trap (\cite{MFVK-15}). Because of the use of two different physical mechanisms, in the physical literature such traps are called hybrid, while for us the same word refers to the resulting multi-dimensional geometry of the system. Serendipitously,   it happens that hybrid traps give rise to hybrid geometry,
although
the two terms were introduced independently with different meaning.

\begin{figure} 
\centering
\begin{tikzpicture}[xscale= 0.5,yscale=0.5]
\node at (-3,-2) [infinito] (-3-2) {};
\node at (9,-2) [infinito] (9-2) {};
\node at (-4,0) [infinito] (-40) {};
\node at (-1,0) [infinito] (-10) {};
\node at (11,0) [infinito] (110) {};
\node at (14,0) [infinito] (140) {};
\node at (8,3) [nodo] (83) {};
\node at (8,13) [infinito] (813) {};
\node at (8,16) [infinito] (816) {};
\node at (8,0) [infinito] (80) {};
\node at (8,-7) [infinito] (8-7) {};
\node at (8,-10) [infinito] (8-10) {};
\node at (2,6) [infinito] (26) {};
\node at (5,6) [infinito] (56) {};
\node at (7,8) [infinito] (78) {};
\node at (17,6) [infinito] (176) {};
\node at (20,6) [infinito] (206) {};
\node at (19,8) [infinito] (198) {};

\node at (1,4) [infinito] (14) {$\J$};
\node at (3.5,1) [infinito] (41)  {$\Pi$};
\node at (9,3) [infinito] (8525) {$\mathcal J$};
\node at (7,12) [infinito] (712) {\begin{turn}{90} 
$\ell$ 
\end{turn}};
\draw [-] (-10) -- (110);
\draw [-] (-10) -- (56);
\draw [-] (110) -- (176);
\draw [-] (56) -- (176);
\draw [-] (83) -- (813);
\draw [-] (80) -- (8-7);
\draw [dashed] (-3-2) -- (-10);
\draw [dashed] (-40) -- (-10);
\draw [dashed] (110) -- (140);
\draw [dashed] (9-2) -- (110);
\draw [dashed] (26) -- (56);
\draw [dashed] (176) -- (198);
\draw [dashed] (56) -- (78);
\draw [dashed] (176) -- (206);
\draw [dashed] (813) -- (816);
\draw [dashed] (83) -- (80);
\draw [dashed] (8-7) -- (8-10);
\end{tikzpicture}
\caption{ The hybrid $\J$. The origin of the coordinates on both the line $\ell$ and the plane $\Pi$ is set at the junction $\mathcal J$.}
\label{fig-hyb}
\end{figure}
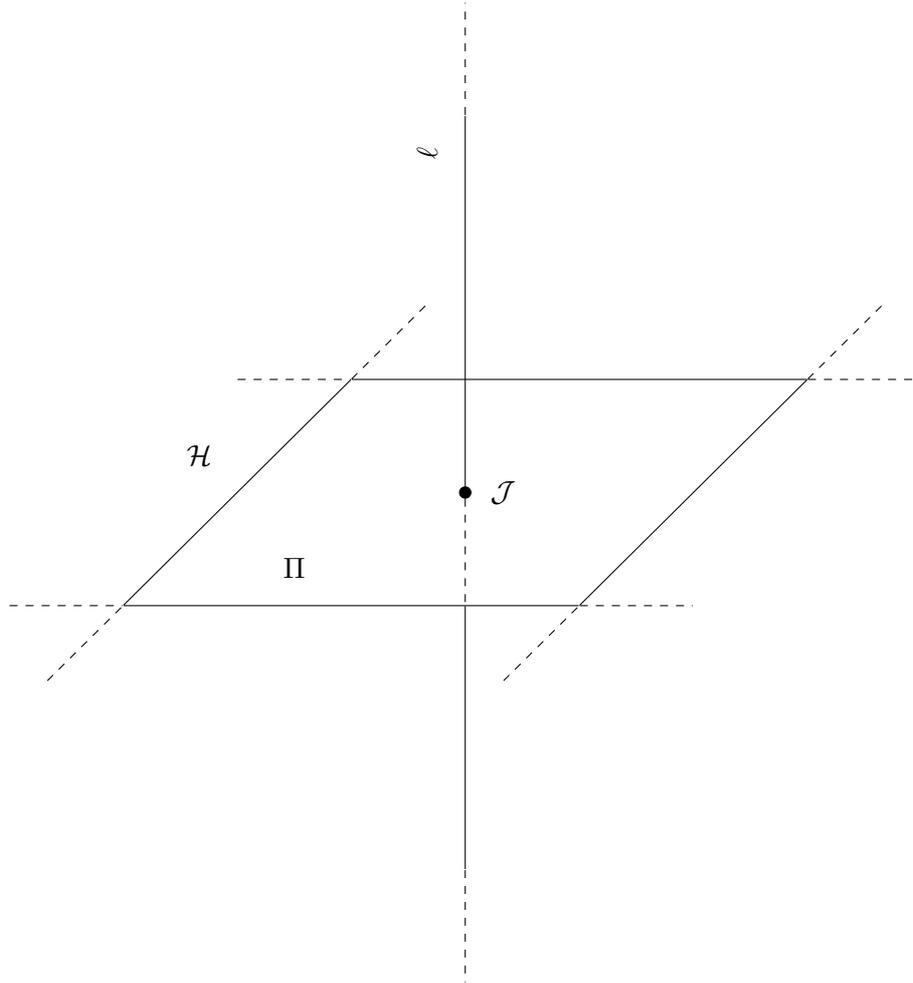

Let us describe more closely the effect of a hybrid trap. The confining potential $V$ decomposes into a term $V_M$, describing the magnetic trap, and a term $V_O$, related to the optical trap, so that
$$ V(\x) = V_M (\x) + V_O (\x), \qquad \x = (x,y,z) \in \R^3, $$
where, according to the description given in \cite{MFVK-15}, the effective
shape of the two potentials can be modeled as
$$V_M (\x) = 2 \nu B |z|, \qquad V_O = \frac 12 m \omega^2_O (x^2 + y^2).$$
Here $\nu$ is the magnetic moment of the particles in the condensate and $m$ their mass, $B$ is the magnetic field generated by the trap, and
$\omega_O$ is the frequancy of the optical trap. The net effect of the double trap is that the resulting spatial distribution of the condensate is
\begin{equation} \label{hybriddensity}
n (\x) \ = \ n_0 \, e^{- \frac{2 \nu B }{kT} |z|} e^{- \frac
{m \omega^2_O}{2 kT} (x^2+ y^2)}, 
\end{equation}
where $k$ is the Boltzmann constant and $T$ is the temperature of the system. Notice that the first factor concentrates the condensate on the plane $z = 0$, while the second factor on the $z$ axis.

One can then distinguish three regions in the hybrid, according to the size of the density: 

\begin{enumerate}
\item
the origin, where both
exponentials in  
\eqref{hybriddensity} equal one, so the density is of order $1$;
\item the $(x,y)$-plane, where the density $n$ is dumped by the optical trap, namely by the second exponential factor in \eqref{hybriddensity}, and the $z$-axis, where the first exponential factor, representing the magnetic trap, attenuates the density of particles;
\item the rest of the space, where the density is shrunk by both traps.
\end{enumerate}
Moreover, notice that as the temperature goes to the absolute zero, the density concentrates at the origin, with an exponential 
tail on $\J$, and a double exponentially tiny rest outside $\J$. Our aim is to describe such profile at least qualitatively.

\medskip
Hybrids were first introduced in \cite{ES-87}, then further investigated in \cite{ES-88} and later in \cite{CE-11,CP-17}. In such works the physical context ranges from classical electromagnetism to quantum
mechanics, and the dynamics is always linear. The hybrid structure considered there is made of a plane with a halfline attached  at its origin. 
On the same hybrid manifold we studied the problem of the Ground State of condensates in \cite{ABCT-24}. The problem we investigate here is a simple variant of that, so we refer to that work for the technical aspects of the proofs. Here we aim at discussing the meaning and the 
formulation of the problem. Therefore, in Section 2 we construct the energy functional
on $\J$ and compare it to the classical Nonlinear Schr\"odinger energies. In Section 3 we
give the Gargliardo-Nirenberg estimates, that are crucial for the existence of a Ground State,
and present the main result. Technically it deals with the lack of compactness of minimizing
sequences, but it can be told quite simply, by saying that a Ground State exists 
if the hybrid is energetically more convenient than the line. In other words, the hybrid must
compete, at an energetical level, with the line and {\em not} with the plane. Finally,
we give sufficient conditions to make the hybrid energetically convenient, namely, conditions
on the parameters of the system in order to get this convenient. 

One can sketch such conditions as follows. If one aims at constructing an effective trap for a Bose-Einstein condensate with a shape close to $\J$,
namely a trap able to capture the system in a geometry of that kind, then  the junction must unfold an attractive effect on the condensate. Now, since the most general junction can produce a contact interaction on the line, another contact interaction on the plane, and a coupling between the two, one has to consider the possible attractive feature of the three of them. More precisely, to trap a condensate on
a region modeled by $\J$, one must ensure that:
\begin{enumerate}
    \item the junction between the line and the plane deploys an attractive action on the part of the condensate located on the line, towards the origin of the line;
    indeed, this makes the system \textquotedblleft line + junction" energetically convenient with respect
    to the line alone, and so the hybrid prevails on the line;
    \item if the previous condition is not fulfilled, then one has to make the plane more convenient than the line, so the hybrid will be more convenient too;
    \item if both the previous conditions are not fulfilled, then one has to provide that the coupling 
    between the plane and the line is strong: indeed its effect always proves  energetically convenient and, above a certain threshold, it makes the hybrid more convenient than the line.
\end{enumerate}
These prescriptions are stated more precisely in Corollary \ref{thm:convenient}.

On the other hand, if none among the conditions
(1), (2), and (3) is satisfied, then it may happen that
the condensate tends to escape through the line far from the junction.
According to our analysis, this runaway phenomenon cannot
take place on the plane.

We stress here that in our model we are not able to 
treat the fourth power nonlinearity for the component of the energy on the plane, for technical reasons that will be clarified in the next section. We can of course approach such power
arbitrarily, but this does not guarantee that our result
can be transposed by continuity to the physical case with the power four.

Propositions \ref{prop:e2q} and \ref{massconserved} are proved in
some detail since their direct proof is not in \cite{ABCT-24}.

\section{The problem}
We assume that the spatial domain in which the condensate is confined by the trap is  represented by the hybrid domain $\J$ and
aim at investigating whether, given the number of particles $N$ that compose the condensate, there is a Ground State. Let us translate such problem in the terms of the Calculus of Variations.

\subsection{Heuristic construction of the energy functional}
First, we write the hybrid domain $\J$ as
$$ \J \ = \ \ell \cup \Pi, $$ 
where $\ell$ is the $z$-axis and $\Pi$ is the $(x,y)$-plane.
Second, we introduce a mathematical object that plays
the role of a wave function defined on $\J$. It is natural to assume that the state of the condensate is represented by
a two-component function $U = (u,v)$,
 where $u : \ell \to \C$ is the portion of the wave function supported on the line, while $v: \Pi \to \C$ is the part supported on the plane. In order to establish to which spaces the 
functions $u$ and $v$ belong, 
we have to introduce an energy functional $E$ that adapts the energy \eqref{energycondensate} to the structure of $\J$. It is natural to assume that such energy functional is composed by three terms:
$$E (U) = E_\alpha (u, \ell) + E_\sigma (v, \Pi) + E_{\ju} (U), $$
where
\begin{itemize}
\item $E_\alpha (\cdot, \ell)$ is the energy of the portion of $U$ on $\ell$. It comprises the kinetic energy of the one-dimensional wave packet $u$, the self-consistent nonlinear term, and the contact interaction between the line and the junction.
Then, we write it as
\begin{equation} \label{ealfa}
E_\alpha (u, \ell) \ = \ \frac 12 \| u' \|_{L^2 (\R)}^2 - \frac 1 p 
\| u \|_p^p + \frac \alpha 2 |u (0)|^2, \qquad 2 < p < 6, \, 
\alpha \in \R.
\end{equation}
Notice that the contact interaction corresponds here to
the action of a Dirac's delta potential.

\item $E_\sigma (v, \Pi)$ is the energy of the portion of $U$ on $\Pi.$ It includes a nonlinear term and a contact interaction at the junction. Now, as widely known 
(\cite{AGHKH-88,RS-80}), a pointwise contact interaction in dimension two cannot be described by a distributional delta potential as in dimension one, i.e. it cannot be written as $| v (0) |^2$, since in two dimensions a generic function in $H^1$ has no pointwise evaluation. More precisely, contrarily to the one-dimensional case, in two dimensions such a pointwise perturbation is not small with respect to the kinetic term in the sense of the quadratic forms, therefore the definition of the point interaction cannot be 
achieved in the same way. It is nowadays well understood that the construction of such kind of interaction can be equivalently  realized by following two different paths:
a renormalization procedure \cite{BF-61} or the use of the theory of self-adjoint extensions of hermitian operators \cite{RSII-80}. Even though the two approaches end up with the same result, they are complementary to each other, as the renormalization procedure highlights the interpretation of contact interactions as limits of spatially extended potentials, while the self-adjoint extension theory provides
a simple algorithm that directly gives all possible 
point interactions \cite{AGHKH-88}.

Eventually the introduction of a contact interaction modifies the energy domain by introducing a singularity at the site of the interaction. More specifically such a domain results in
\begin{equation} \label{energydom2d}
\D_2 = \left\{ u = \phi + q \frac{K_0}{2 \pi}, \, \phi \in H^1 (\R^2), \, q \in \C
\right\},
\end{equation}
where $K_0$ is a  function of the Bessel family, more precisely the MacDonald function of order zero whose asymptotic behaviour at the origin is $K_0 (x) \sim -   \log |x|$ and that decays exponentially fast
for large $|x|$ \cite{GR-66}. 
Notice that the presence of a point interaction in dimension two results in the formation
of a logarithmic singularity.
The complex factor $q$ is usually called the charge and 
represents the size of the singularity of the function $v$. Notice that this prescription  concentrates a significant portion 
of the state around the origin unless $q = 0$. 

The energy functional $E_\sigma (\cdot, \R^2)$ acts as follows:
\begin{equation} \begin{split}
\label{esigma}
E_\sigma (v, \Pi) \ = \ & \frac 1 2 \| \phi \|_{H^1 (\R^2)}^2 - \frac 1 2
\| v \|_{L^2 (\R^2)}^2 \\ & + \frac \sigma 2 |q|^2 -
\frac 1 r
\| v \|_{L^r (\R^2)}^r, \quad 2 < r < 4, \, \sigma \in \R.
\end{split}
\end{equation}
We stress that here the coupling constant $\sigma$ does not coincide with its analogous 
$\rho$ employed in \cite{ABCT-24} as well as with the 
parameter $\sigma$ previously used in \cite{ABCT-22}. Indeed,
one has
$$ \sigma = \rho + \frac {\gamma - \log 2} {2 \pi},$$
where $\gamma$ is the Euler-Mascheroni constant.

\item $E_{\ju} (U)$ is the energy of the interaction between $\ell$ and $\Pi$ that takes place at the junction. It describes exclusively the exchange of energy between the line and the plane. In other words, it is the only term which, under a dynamical point of view, realizes the transmission of a signal from $\ell$ to $\Pi$ and vice versa. For its precise shape we choose the simplest possible option, namely a quadratic coupling:
\begin{equation} \label{ebeta}
E_{\ju} (U) \ = \ - \beta \Re ( \overline{q} u(0) ), \qquad \beta \geq 0.
\end{equation}
\end{itemize} 

Summarizing, we constructed the energy functional

\begin{equation}
    \label{e} \begin{split}
E (U)  =  & \,E_\alpha (u, \ell) + E_\sigma (v, \Pi) 
+ E_{\ju} (U) \\
 =  &
 \,\frac 1 2 \| u' \|_{L^2 (\R)}^2 + \frac \alpha 2 | u (0) |^2 - \frac 1 p \| u \|^p_{L^p (\R)}
+ \frac 1 2 \| \phi \|_{H^1 (\R^2)}^2 
- \frac 12 \|v \|_{L^2 (\R^2)}^2  \\ &
+
\sigma
\frac{|q|^2}{2} - \frac 1 r \| v \|^r_{L^r (\R^2)}
-  \beta \Re (\ov q {u(0)}), \\ & \, {\rm{with}}
\  2 < p < 6, \ 2 < r < 4, \ \alpha, \sigma \in \R,
 \beta \geq 0,
\end{split}
\end{equation}
defined on the energy domain
\begin{equation} \label{d} \begin{split}
{\mathcal D} : = & \left\{ U = (u,v) \, {\rm s.t.} \, u \in H^1 (\R), \, 
v = \phi + q \frac {K_0} {2 \pi}, \,
\phi \in H^1 (\R^2), \, q \in \C \right\}.
\end{split}
\end{equation}

\subsection{Some comments on the energy functional $E$} \label{sec:comments}
Let us explain the meaning of the choice of the parameters in the construction of the energy functional \eqref{e}.


Concerning the term $E_\alpha (u, \ell)$
notice that in \eqref{ealfa} we consider an arbitrary nonlinearity power $p$ and limit our analysis to the focusing case, namely the case in which the nonlinearity has a negative sign and thus, dynamically, an attractive effect. The upper bound $p < 6$ means that we restrict to nonlinearities that can be controlled by the kinetic term, as described by the one-dimensional 
Gagliardo-Nirenberg estimate
\begin{equation} \label{gn1}
\| u \|_{L^p (\R)}^p \ \leq \ C \| u' \|_{L^2 (\R)}^{\frac p 2 -1}
\| u \|_{L^2 (\R)}^{\frac p 2 + 1}, \qquad u \in H^1 (\R),
\end{equation}
showing that, while for $p < 6$ the nonlinearity grows
slower than the kinetic term, for $p = 6$ it may grow at the same rate of the kinetic term, making the energetic balance more delicate. 
Eventually, it is well-known \cite{C-03} that if $\alpha = 0$ then the choice $p = 6$ results in a phenomenology that radically differs from that of the case $p < 6$, both in the statics and in the dynamics: if $p=6$ then existence of Ground States depends on the chosen value of $\mu$, and in the related evolution problem blow-up solutions appear whose existence 
cannot be extended to arbitrarily large time. 
Both features are not present if $p<6$, for which a Ground State
for $E_0 (u, \ell)$ is present at every value of the mass and all solutions to the dynamical problem in the energy space are defined globally in time.

For its peculiar character,
the power nonlinearity with $p= 6$ is called critical, and here we
treat subcritical nonlinearities only.

Let us point out that for $\alpha = 0$ the resulting energy 
functional $E_0 (\cdot, \ell)$ is the standard NLS energy functional with focusing subcritical power nonlinearity. It is
a long standing result \cite{ZS-71,C-03} that for every value of the mass there is a
family of ground states, called solitons, that are obtained by translating and multiplying by a phase the unique positive, even
Ground State
\begin{equation} \label{soliton}
\varphi_\mu (x) \ = \  \mu^{\frac 2 {6-p}} \varphi ( \mu^{\frac
{p-2}{6-p}} x),
\end{equation}
where
$$\varphi (x)= C_p \, {\rm sech}^{\frac 2 {p-2}} (c_p x),$$
 with $C_p$ and $c_p$  constants depending on the power $p$ only.
 
Furthermore, the energy of the solitons scales as
$$ \mathcal E_{0, \ell} (\mu) \ = \ E_0 (\varphi_\mu, \ell) \ =
\ - \theta_p \mu^{\frac{p+2}{6-p}}, $$
where $\theta_p$ is a positive constant that depends on $p$ only,
and we  introduced the symbol
\begin{equation}
\mathcal E_{\alpha, \ell} (\mu) \ = \ \inf_{u \in H^1 (\R), \, \int_\ell |u|^2 = \mu} E_\alpha (u, \ell).
\end{equation} 
The problem of the Ground States for $E_\alpha (\cdot, \ell)$ was
studied in \cite{FOO-08,FJ-08}. For an exhaustive review see \cite{T-23}.


Concerning the term $E_\sigma (v, \Pi)$ in \eqref{e},
we introduced the strength of the contact interaction 
$\sigma$ with either sign, restricted to 
the focusing nonlinearity by choosing a negative sign for the last term, and
moved from the physical fourth power to a generic power $r$. In fact, in this paper we do not allow
the values $r \geq 4$ because 
$r=4$ is the critical power
in dimension two, as described by the two-dimensional Gagliardo-Nirenberg inequality:
$$ \| v \|_{L^r (\R^2)}^r \ \leq \ C \| \nabla v \|_{L^2 (\R^2)}^{r-2} \| v \|_{L^2(\R^2)}^2, $$
that states that from $r = 4$ the kinetic term in $E_\sigma (\cdot, \Pi)$ ceases to control the nonlinear
term. 
From the construction of the
two-dimensional point interaction follows that in order to recover the standard focusing two-dimensional NLS energy, one has to set $\sigma = \infty$ instead of the more 
intuitive $\sigma = 0$. This delicate point can be heuristically 
explained by considering that for $\sigma = \infty$ in order
to get a finite energy it must be $q = 0$, that reduces the 
domain $\D_{2}$ to $H^1 (\R^2)$ through $v = \phi$ and makes the energy 
$E_\sigma (\cdot, \Pi)$ equal to
$$ E_\infty (v, \Pi) \ = \ \frac 1 2 \| \nabla v \|_{L^2 (\R^2)}^2
- \frac 1 r \| v \|_{L^r (\R^2)}^r,$$
that is the standard NLS energy.

We will use the symbol
$$\E_{\sigma, \Pi} (\mu) \ = \ \inf_{v \in \D_2, \int_\Pi |v|^2 = \mu} E (v, \ell).$$
The problem of the Ground States for $E_\sigma (\cdot, \Pi)$ was analyzed in \cite{ABCT-22} and in \cite{FGI-22} for the action functional of the 
same model.


Concerning the term $E_{\ju} (U)$ in \eqref{e}
we point out that considering a non-negative strength $\beta$ is not restrictive, 
since for every complex $\beta$ it is possible to reduce to \eqref{ebeta} by suitably modulating the relative phase between $q$ and $u (0)$. The negative sign corresponds to the choice for which having the same constant phase on $\ell$ and $\Pi$ is 
energetically convenient.


\subsection{Formulation of the problem}
As anticipated, the problem we investigate is the existence of Ground States. Among the several notions of Ground States present in
the literature and consistently with the physical
features of the Ground States in a condensate, we choose the one which generalizes the ordinary quantum
mechanical notion of Ground State to a nonlinear setting.
\begin{definition}
Given $\mu > 0$, a Ground State at mass $\mu $ for the energy functional $E$ defined by \eqref{e} on the domain $\D \subset L^2 (\J)$, is an element of $\D$ that satisfies the
mass constraint
\begin{equation}
\label{mass}
\int_\J |U|^2 \ = \ \int_\ell |u|^2 + \int_{\Pi} |v|^2 \ = \ \mu \ 
\end{equation}
and minimizes $E$ among all functions in $\D$ satisfying the same constraint.
\end{definition}

It is convenient to introduce the notation
\begin{equation} \label{dmu}
{\mathcal D}^\mu : = \{ U \in {\mathcal D}\, {\rm{s.t.}} \, U \, {\rm{fulfils \ the \ constraint \ \eqref{mass}}} \}
\end{equation}
and to denote
\begin{equation}
\label{notinf}
\E (\mu) \ = \ \inf_{U \in D^\mu} E (U).
\end{equation}


We are then led to the following formulation of
the problem of the Ground States for the Nonlinear
Schr\"odinger Equation on the hybrid $\J$:

\medskip

\noindent 
{\em
Let $\alpha, \sigma \in \R$, $\beta \geq 0$, $2 < p < 6$, $2 < r < 4$, and  $\mu > 0$. Establish whether there
exist Ground States at mass $\mu$ of the energy \eqref{e}.}

\medskip


Notice that without imposing the mass constraint there would be no Ground State, since $E (\lambda U) \to - \infty$ as $\lambda \to \infty$ for every non-zero $U \in \D$.
However the presence of the constraint alone does not ensure the existence for the following  reasons: 
\begin{itemize}
\item
first, the energy, although constrained
to a fixed mass, may not be lower bounded. This happens for instance in the problem on the line $\ell$ in the supercritical case  $p > 6$ for every value of the mass. To realize it, consider a function $u$ of a real variable and the family of functions $u_\lambda (x) = \sqrt \lambda u (\lambda x).$ Then,
$$ E_\alpha (u_\lambda, \ell) \ = \ \frac{\lambda^2}{2} \| u' \|_{L^2 (\R)}^2
- \frac{\lambda^{\frac p 2 -1}} p \| u\|_{L^p (\R)}^p + 
\frac {\lambda^2 \alpha} 2 | u (0) |^2 \ \to \ - \ \infty, \quad \lambda \to + \infty. $$
Therefore, since $\| u_\lambda \|_{L^2 (\R)} = \| u \|_{L^2 (\R)}$
we have that the energy $E_\alpha (\cdot, \ell)$ constrained to the arbitrary mass $\| u \|_{L^2 (\R)}^2 > 0$ is not lower bounded. 
Notice that the assumption $p > 6$
is crucial to this conclusion;

\item on the other hand, consider the subcritical case $p < 6$ on the line; then the Gagliardo-Nirenberg  inequality \eqref{gn1} applied to $E_\alpha (\cdot, \ell)$ with $\alpha > 0$ gives the uniform lower bound
\begin{equation}
E_\alpha (u, \ell) \ \geq \ \frac 1 2 \| u' \|_{L^2 (\R)}^2
- C \| u' \|_{L^2 (\R)}^{\frac p 2 -1}
\mu^{\frac p 4 + \frac 1 2} \ \geq \ M \ > \ -\infty,
\end{equation}
where we crucially used $p < 6$ and denoted $\mu = \| u \|^2_{L^2 (\R)}$. Therefore, owing to the constraint  the
energy functional $E_\alpha (\cdot, \ell)$ is lower bounded
for every mass, so that
$$ \inf_{u \in H^1 (\R), \int_\ell |u|^2 = \mu} E_\alpha (u, \ell) > - \infty.$$
On the other hand, for any $u \in H^1 (\R)$ such that $\int_\R |u|^2 dx = \mu$, due to the choice $\alpha > 0$ it holds
\begin{equation} \label{notattained}
 E_\alpha (u, \ell) \ \geq \ E_0 (u, \ell) \ \geq \ E_0 ( \varphi_\mu, \ell), 
\end{equation}
where $\varphi_\mu$ is the
soliton introduced in \eqref{soliton}. One then gets
\begin{equation}
 \mathcal E_{\alpha, \ell} (\mu) \geq \mathcal E_{0, \ell} (\mu). 
\label{infuno}
\end{equation}
On the other hand, consider the family of translations 
$\varphi_{\mu,\lambda} (x) \ = \ \varphi_\mu (x - \lambda)$
of the Ground State $\varphi_\mu$ at mass $\mu$ for $E_0 (\cdot, \R)$, so
$$E_\alpha (\varphi_{\mu, \lambda}, \ell) \ = \ E_0 (\varphi_\mu, \ell) + \frac \alpha 2 | \varphi_\mu (\lambda) |^2 \ \to \ 
E_0 (\varphi_\mu, \ell), \quad \lambda \to \infty,
$$
\begin{equation} \label{infdue}
\mathcal E_{\alpha, \ell} (\mu) \leq \mathcal E_{0, \ell} (\mu). 
\end{equation}
Then, from \eqref{infuno} and \eqref{infdue} one concludes
\begin{equation} \label{inftre}
\mathcal E_{\alpha, \ell} (\mu) \ = \ \mathcal E_{0, \ell} (\mu).
\end{equation}

Now, the first inequality in \eqref{notattained} is an equality if and only if $u (0) = 0$, while the second is an equality if and only if $u = \varphi_\mu$ 
up to translations and multiplications by a phase. But as $\varphi_\mu$ is strictly positive, the two inequalities cannot be both equalities, so that
\begin{equation} \label{strictlyworse}
E_\alpha (u, \ell) \ > \ E_0 (\varphi_\mu, \ell), \quad \forall u \in H^1 (\R) \, \ {\rm s.t.} \, \int_\R |u|^2 = \mu,
\end{equation}
from which
it follows that there is no function $u$ that attains the infimum $\mathcal E_{\alpha, \ell} (\mu)$, so that there is no ground state at mass $\mu$.
\end{itemize}

Summing up, the existence of a Ground State at mass $\mu$ 
is equivalent to two conditions: first, the infimum of the constrained energy must be finite. Second, it must be attained by some function. We stress that, contrarily to what happens in 
standard Quantum Mechanics, when a nonlinearity is involved these
two conditions are not guaranteed.

\section{The main result}
Here we give a condition equivalent to the existence of Ground States
at mass $\mu$. For the proof we refer to the proof of the
analogous result given in \cite{ABCT-24} for the hybrid plane $\I$.

\subsection{Lower boundedness of the constrained energy}
First,
like in the case of classical domains, the lower boundedness 
of the constrained energy is guaranteed by subcriticality through
the Gagliardo-Nirenberg estimates, which establish that every
subcritical nonlinearity can be controlled by the kinetic term of the energy. Now, such estimates are
widely known for functions in Sobolev spaces, but here one
needs them to hold in the energy domain $\D$, that strictly
contains the space $H^1 (\R) \oplus H^1 (\R^2)$. 
In \cite{ABCT-24} it was shown how to extend 
Gagliardo-Nirenberg estimates to the hybrid plane $\I$, and the very same method can be applied for the hybrid manifold $\J$.

First of all notice that, in the nonlinear term 
$$ - \frac 1 p \| u \|_{L^p (\R)}^p - \frac 1 r \| v \|_{L^r (\R^2)}^2, $$
the first term can be treated by the classical Gagliardo-Nirenberg
estimate \eqref{gn1}. The same is not true for the second term,
which lies outside $H^1(\R^2)$ unless $q = 0$. Following \cite{ABCT-22,ABCT-24}, for the case $q \neq 0$ we rewrite $v \in \D_2$, $v = \phi + q \frac {K_0}{2 \pi},$ as
\begin{equation} \label{decq}
v (x) \ = \ \phi_{q} (x) + q \frac{K_0 (|q|x)}{2 \pi}, \qquad
\phi_q \in H^1 (\R^2),
\end{equation}
which is possible since $K_0 (|q|x) \sim K_0 (x) \sim - \log |x|$ as $x \to 0$. Then
\begin{equation} \label{diffgreen}
\phi_q (x) = \phi (x) + \frac q {2 \pi}
(K_0 (x) - K_0 (|q| x)).
\end{equation}


One has the following proposition:

\begin{proposition}[Gagliardo-Nirenberg estimate] \label{prop:gnj}
Let $U = (u,v) \in \D^\mu$. Write 
\begin{equation} 
\nonumber
v (x) \ = \ \left\{
\begin{array} {cc}
\phi_q (x) + q \frac {K_0 (|q|x)}{2 \pi}, & \ q \neq 0 \\
\phi_0 (x), & \ q = 0 \end{array}
\right.
\end{equation}
Then,
\begin{equation}
\label{gnj} \begin{split}
\| u \|_{L^p (\R)}^p \  & \leq \ C \| u' \|_{L^2 (\R)}^{\frac p 2 -1}
\\
\| v \|_{L^r (\R^2)}^r \ & \leq \ C 
( \| \nabla \phi_q \|_{L^2 (\R^2)}^{r-2} + |q|^{r-2} ),
\end{split} \end{equation}
where the mass $\mu$ was absorbed in the constants $C$.

\end{proposition}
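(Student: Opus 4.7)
The first estimate is nothing but the classical one-dimensional Gagliardo-Nirenberg inequality \eqref{gn1} applied to $u\in H^1(\R)$. Since the mass constraint gives $\|u\|_{L^2(\R)}^2\leq\mu$, the factor $\|u\|_{L^2(\R)}^{\frac p2+1}$ can be absorbed into the multiplicative constant, and the first line of \eqref{gnj} follows.

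For the second estimate the case $q=0$ reduces again to the classical two-dimensional Gagliardo-Nirenberg inequality applied to $\phi_0\in H^1(\R^2)$, with $\|\phi_0\|_{L^2(\R^2)}^2\leq\mu$ absorbed in $C$. The substantive case is $q\neq 0$. My plan is to exploit the specific scaling of the decomposition \eqref{decq}: namely, that the rescaled singular profile $K_0(|q|\,\cdot)$ produces $L^s$--norms that scale in $|q|$ with exactly the right exponent. Precisely, the change of variables $y=|q|x$ yields
\begin{equation}
\left\| \tfrac{q}{2\pi}\,K_0(|q|\,\cdot)\right\|_{L^s(\R^2)}^s \ = \ \tfrac{|q|^s}{(2\pi)^s}\,|q|^{-2}\,\|K_0\|_{L^s(\R^2)}^s \ = \ C_s\,|q|^{s-2},
\end{equation}
valid for every $s\geq 1$ for which $K_0\in L^s(\R^2)$; this is the case for all $s\geq 1$ because $K_0$ has a logarithmic singularity at the origin (hence $K_0\in L^s_{\mathrm{loc}}(\R^2)$) and decays exponentially at infinity. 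In particular for $s=2$ the right-hand side is a \emph{constant independent of $q$}, while for $s=r$ we get exactly the $|q|^{r-2}$ term that appears in the desired bound.

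With this at hand the proof proceeds in two steps. First, by the triangle inequality and the $s=2$ computation above,
\begin{equation}
\|\phi_q\|_{L^2(\R^2)}\ \leq\ \|v\|_{L^2(\R^2)} + \left\|\tfrac{q}{2\pi}K_0(|q|\,\cdot)\right\|_{L^2(\R^2)}\ \leq\ \sqrt{\mu}+C_2^{1/2},
\end{equation>
so that the $L^2$--norm of the regular part $\phi_q\in H^1(\R^2)$ is controlled solely in terms of $\mu$. Second, applying the classical two-dimensional Gagliardo-Nirenberg inequality to $\phi_q$,
\begin{equation}
\|\phi_q\|_{L^r(\R^2)}^r\ \leq\ C\,\|\nabla\phi_q\|_{L^2(\R^2)}^{r-2}\,\|\phi_q\|_{L^2(\R^2)}^2\ \leq\ C(\mu)\,\|\nabla\phi_q\|_{L^2(\R^2)}^{r-2}.
\end{equation}
Combining these with the elementary inequality $|a+b|^r\leq 2^{r-1}(|a|^r+|b|^r)$ and the $s=r$ computation yields
\begin{equation}
\|v\|_{L^r(\R^2)}^r\ \leq\ 2^{r-1}\bigl(\|\phi_q\|_{L^r(\R^2)}^r+C_r|q|^{r-2}\bigr)\ \leq\ C\bigl(\|\nabla\phi_q\|_{L^2(\R^2)}^{r-2}+|q|^{r-2}\bigr),
\end{equation}
as required.

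The main conceptual point, and the only real obstacle, is to recognize that the introduction of the $q$--dependent rescaling in \eqref{decq} is what makes the argument work: had one decomposed $v=\phi+qK_0/(2\pi)$ with the unscaled Bessel function, the singular part would contribute $L^r$ and $L^2$ norms proportional to $|q|$ and $|q|$ respectively, giving an estimate linear in $|q|$ instead of the sharp $|q|^{r-2}$ behaviour (which is critical, since $r-2<2$ for $r<4$ and must match the way the charge enters the energy through the term $\frac{\sigma}{2}|q|^2$). Apart from that, once the scaling has been set up correctly, the argument is a routine combination of triangle inequality, mass constraint, and the standard Gagliardo-Nirenberg inequalities in $H^1(\R)$ and $H^1(\R^2)$ — exactly the strategy carried out for the hybrid plane $\I$ in \cite{ABCT-24}.
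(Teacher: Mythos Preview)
Your proof is correct and follows precisely the method the paper invokes from \cite{ABCT-24}: compute the $L^s$-scaling of the singular part $qK_0(|q|\,\cdot)/(2\pi)$, use the $s=2$ case together with the mass constraint to bound $\|\phi_q\|_{L^2(\R^2)}$ uniformly, apply the classical two-dimensional Gagliardo--Nirenberg inequality to $\phi_q$, and recombine via the triangle inequality. The paper itself does not spell out the argument but only states the proposition and refers to \cite{ABCT-24}, so your write-up is in fact more detailed than what appears there.
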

As in the standard case, Gagliardo-Nirenberg estimates are
useful in the proof of the lower boundedness of the constrained energy. To show this, we first rewrite the energy of the planar 
component $v$ in terms of the decomposition \eqref{decq}. 
We get the following result.
\begin{proposition}
\label{prop:e2q}
Let $v \in \D_2$ with $q \neq 0$ and consider the decomposition \eqref{decq}. Then
\begin{equation} \label{e2q}
E_{\sigma}(v, \Pi) \ = \ \frac 1 2 \| \nabla \phi_q \|_{L^2 (\R^2)}^2 + \frac{| q |^2} 2 \| \phi_q \|_{L^2 (\R^2)}^2 -
\frac{|q|^2} 2  \| v \|_{L^2 (\R^2)}^2 + \left( \frac \sigma 2 
+ \frac{\log |q|}{4 \pi} \right) |q |^2 - \frac 1 r \| v \|_{L^r
(\R^2)}^r.
\end{equation}
\end{proposition}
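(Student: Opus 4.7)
After substituting the new decomposition $v = \phi_q + \frac{q}{2\pi}K_0(|q|\cdot)$ into $E_\sigma(v,\Pi)$ and recalling that $\phi = v - \frac{q}{2\pi}K_0(|\cdot|)$, the nonlinear term $-\frac{1}{r}\|v\|_{L^r}^r$ and the $\sigma$-contact term $\frac{\sigma}{2}|q|^2$ appear identically on both sides of \eqref{e2q}, so the whole proposition is equivalent to the single identity
\begin{equation*}
\|\phi\|_{H^1(\R^2)}^2 \;=\; \|\nabla\phi_q\|_{L^2}^2 + |q|^2\|\phi_q\|_{L^2}^2 + (1-|q|^2)\|v\|_{L^2}^2 + \frac{|q|^2\log|q|}{2\pi}.
\end{equation*}
Indeed, plugging this into $E_\sigma(v,\Pi) = \frac12\|\phi\|_{H^1}^2 - \frac12\|v\|^2 + \frac{\sigma}{2}|q|^2 - \frac{1}{r}\|v\|_{L^r}^r$ and collecting the $\|v\|^2$ contributions produces exactly the right-hand side of \eqref{e2q}.

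\textbf{Reduction to two scalar computations.} From \eqref{diffgreen} one has $\phi = \phi_q - \frac{q}{2\pi}\Theta$ with $\Theta(x) := K_0(|x|) - K_0(|q||x|)$. The local expansion $K_0(t) = -\log t + \log 2 - \gamma + O(t^2\log t)$ as $t\to 0^+$ shows that the logarithmic singularities cancel, so $\Theta \in H^1(\R^2)\cap C(\R^2)$ with $\Theta(0) = \log|q|$. Expanding $\|\phi\|_{H^1}^2 = \|\phi_q - \frac{q}{2\pi}\Theta\|_{H^1}^2$ produces the desired $\|\phi_q\|_{H^1}^2$ plus a cross term $-\frac{1}{\pi}\Re\bar q\,\langle\phi_q,\Theta\rangle_{H^1}$ and a self term $\frac{|q|^2}{4\pi^2}\|\Theta\|_{H^1}^2$, so the task reduces to computing these two quantities. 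For the cross term I would use the distributional identities
\[(-\Delta+1)K_0(|\cdot|) = 2\pi\delta_0, \qquad (-\Delta + |q|^2)K_0(|q|\cdot) = 2\pi\delta_0,\]
which yield $-\Delta\Theta = |q|^2 K_0(|q|\cdot) - K_0(|\cdot|) \in L^2(\R^2)$; by $H^{-1}$--$H^1$ duality this gives $\langle \phi_q,\Theta\rangle_{H^1} = (|q|^2-1)\int_{\R^2} \phi_q\, K_0(|q|\cdot)\,dx$, and the remaining scalar integral is re-expressed through $\|v\|^2 - \|\phi_q\|^2$ by the parallel expansion $\|v\|^2 = \|\phi_q + \frac{q}{2\pi}K_0(|q|\cdot)\|^2$.

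\textbf{Main obstacle.} The decisive calculation is the explicit evaluation of the constant $\|\Theta\|_{H^1(\R^2)}^2$, since it is the unique source of the $\log|q|$ correction. I would carry it out via Plancherel, exploiting the closed-form Fourier transforms $\widehat{K_0(|\cdot|)}(\xi) = 2\pi/(1+|\xi|^2)$ and (by scaling) $\widehat{K_0(|q|\cdot)}(\xi) = 2\pi/(|q|^2+|\xi|^2)$; the crucial cross integral
\[\int_{\R^2} K_0(|x|)K_0(|q||x|)\,dx = \int_{\R^2}\frac{d\xi}{(1+|\xi|^2)(|q|^2+|\xi|^2)}\]
reduces by partial fractions to $2\pi\log|q|/(|q|^2-1)$, and this is where the logarithm is produced. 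Once all the explicit constants (in particular $\|K_0(|q|\cdot)\|_{L^2}^2 = \pi/|q|^2$ by scaling) are substituted and combined with the cross-term identity above, the residual constants beyond $\frac{|q|^2\log|q|}{2\pi}$ cancel exactly, establishing the main identity and hence \eqref{e2q}.
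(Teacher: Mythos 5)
Your proposal is correct, and all the explicit constants you quote check out: with the non\nobreakdash-unitary convention $\widehat{K_0(|\cdot|)}=2\pi/(1+|\xi|^2)$ one indeed gets $\|K_0(|q|\cdot)\|_{L^2}^2=\pi/|q|^2$ and $\int_{\R^2}K_0(|x|)K_0(|q||x|)\,dx = 2\pi\log|q|/(|q|^2-1)$, and substituting these into your cross-term identity reproduces exactly $\|\nabla\phi_q\|^2+|q|^2\|\phi_q\|^2+(1-|q|^2)\|v\|^2+\tfrac{|q|^2\log|q|}{2\pi}$, with the $(|q|^2-1)/4\pi$ remainders cancelling as you claim. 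The route is recognizably related to the paper's but organized differently. The paper works entirely on the Fourier side from the outset: it writes $\widehat\phi(k)=\widehat\phi_q(k)+\tfrac{q}{2\pi}\tfrac{1-|q|^2}{(k^2+|q|^2)(k^2+1)}$, expands both $\tfrac12\|\phi\|_{H^1}^2-\tfrac12\|v\|_{L^2}^2$ and the three quadratic terms of the target formula as explicit $k$-integrals, and observes that their difference collapses to the single scalar integral $\tfrac{|q|^2}{8\pi^2}\int_{\R^2}\tfrac{|q|^2-1}{(k^2+|q|^2)(k^2+1)}\,dk=\tfrac{|q|^2}{4\pi}\log|q|$ --- the same partial-fraction integral that produces your logarithm. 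You instead first isolate the equivalent norm identity, expand in physical space via $\phi=\phi_q-\tfrac{q}{2\pi}\Theta$, and use the resolvent equations $(-\Delta+1)K_0=2\pi\delta_0$, $(-\Delta+|q|^2)K_0(|q|\cdot)=2\pi\delta_0$ together with $H^1$--$H^{-1}$ duality to reduce every cross term to a pairing against $K_0(|q|\cdot)$, invoking Plancherel only for two explicit constants. Your version makes the cancellation of the $\delta_0$ singularities and the provenance of the $\log|q|$ term more transparent, at the cost of having to justify the duality pairing for $\Theta\in H^1$ with $\Delta\Theta\in L^2$; the paper's brute-force Fourier computation avoids any such functional-analytic bookkeeping but buries the structure in one long chain of integrals. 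Only a cosmetic caveat: your Fourier normalization differs from the paper's unitary one (where $\widehat{K_0}(k)=1/(k^2+1)$), so if you merge your argument into the text you should harmonize the conventions.
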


\begin{proof}
We define the Fourier transform as
\begin{equation} \label{fourier}
\widehat f (k) \ = \ \frac 1 {2 \pi} \int_{\R^2} f (x) e^{-i k \cdot x} \, dx
\end{equation}
for $f \in L^1 (\R^2)$, then extend it to $L^2 (\R^2)$ by continuity. With such a definition the Fourier transform is a unitary 
operator in all Sobolev spaces $H^s (\R^2), \ s \in \R$.
 Now, by properties of the Bessel functions it is known that, in the space of distributions,
$$ ( - \Delta + 1) K_0 \ = \ 2 \pi \delta$$
and, since in Fourier space the gradient is mapped into the multiplication by the vector $ik$, one has
$$\widehat K_0 (k) = \frac 1 {k^2 + 1} \, \qquad k^2 := |k|^2,$$ then 
by \eqref{diffgreen}
one gets
\begin{equation} \label{diffgreenf}
\widehat \phi (k) = \widehat \phi_q (k) + \frac q {2 \pi}
\frac{1 - |q|^2}{(k^2 + |q|^2)(k^2 +1)}.
\end{equation}
The two first terms of \eqref{esigma} can be rewritten as
\begin{equation} \begin{split} 
&    \frac 1 2 \| \phi \|_{H^1 (\R^2)}^2 - \frac 1 2 \| v \|_{L^2 (\R^2)}^2 \\
  = & \frac 1 2  \int_{\R^2} (k^2 +1 ) \left| \widehat \phi_q (k) + \frac q {2 \pi}
\frac{1 - |q|^2}{(k^2 + |q|^2)(k^2 +1)} \right|^2 dk - \frac 1 2
\int_{\R^2} \left|
\widehat \phi_q (k) + \frac q {2 \pi (k^2 + |q|^2)}
\right|^2 dk \\
 = & \frac 1 2 \int_{\R^2} k^2 | \widehat \phi_q (k)|^2 dk
- \Re \int_{\R^2} \frac{|q|^2 q \, \overline{\widehat \phi_q (k)}}
{2 \pi (k^2 + |q|^2)} dk + \frac{|q|^2}{8 \pi^2}
\int_{\R^2} \frac{dk}{k^2 + |q|^2} \left(\frac{(1 - |q|^2)^2}
{k^2 +1} -1 \label{twofirst}
\right)
\end{split} \end{equation}

On the other hand, the three first terms in \eqref{e2q} can be
rewritten as
\begin{equation} \begin{split} \label{threefirst}
& \frac 1 2 \| \nabla \phi_q \|_{L^2 (\R^2)}^2 + \frac{|q|^2} 2
\| \phi_q \|_{L^2 (\R^2)}^2 - \frac {|q|^2} 2 \| v \|_{L^2 (\R^2)}^2
\\
= & \frac 1 2  \int_{\R^2} (k^2 + |q|^2) | \phi_q (k)|^2 dk
- \frac {|q|^2}{2} 
\int_{\R^2} \left|
\widehat \phi_q (k) + \frac q {2 \pi (k^2 + |q|^2)}
\right|^2 dk \\
= & \frac 1 2  \int_{\R^2} k^2 | \phi_q (k)|^2 dk - |q|^2 \Re \, q
\int_{\R^2} \frac{\overline{\widehat \phi_q (k)} \, dk}
{2 \pi (k^2 + |q|^2)} - \frac{|q|^4}{8 \pi^2} \int_{\R^2}
\frac{dk}{(k^2 + |q|^2)^2}.
\end{split}
\end{equation}
Then, taking the difference between the quantities computed in \eqref{twofirst} and \eqref{threefirst} we conclude
\begin{equation} \label{difference} \begin{split}
& \frac 1 2 \| \phi \|_{H^1 (\R^2)}^2 - \frac 1 2 \| v \|_{L^2 (\R^2)}^2 - \left(\frac 1 2 \| \nabla \phi_q \|_{L^2 (\R^2)}^2 + \frac{|q|^2} 2
\| \phi_q \|_{L^2 (\R^2)}^2 - \frac {|q|^2} 2 \| v \|_{L^2 (\R^2)}^2 \right) \\
= & \ \frac{|q|^2}{8 \pi^2} \int_{\R^2} \frac{dk}{(k^2 + |q|^2)^2}
\left( \frac {(1 - |q|^2)^2}
{k^2 +1} -1 + |q|^2
\right) \\
= & \ \frac{|q|^2}{8 \pi^2} \int_{\R^2} \frac{|q|^2 -1 }{(k^2 + |q|^2)(k^2 + 1)} dk \ = \ \frac{|q|^2}{4 \pi} \log |q|.
\end{split}
\end{equation}
Therefore, from \eqref{esigma} and \eqref{difference}
\begin{equation}
E_\sigma (v, \Pi) \ = \ \frac 1 2 \| \nabla \phi_q \|_{L^2 (\R^2)}^2 + \frac{|q|^2} 2
\| \phi_q \|_{L^2 (\R^2)}^2 - \frac {|q|^2} 2 \| v \|_{L^2 (\R^2)}^2 + 
\frac{|q|^2}{4 \pi} \log |q| + \frac \sigma 2 |q|^2 - \frac 1 r
\| v \|_{L^2 (\R^2)}^2,
\end{equation}
and the proof is complete.
\end{proof}
Plugging estimate \eqref{gnj} in the expression of the energy \eqref{e} one has the following
\begin{proposition} \label{prop:lowerb}
Let $2 < p < 6$ and $2 < r < 4$. Then the functional \eqref{e}
with the constraint \eqref{mass} is lower bounded.
\end{proposition}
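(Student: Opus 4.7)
The plan is to combine the representation of $E_\sigma$ provided by Proposition \ref{prop:e2q} with the Gagliardo--Nirenberg estimates of Proposition \ref{prop:gnj}, and to show that every potentially negative contribution to $E(U)$ can be absorbed, via Young's inequality and the mass constraint, into the positive kinetic-type quantities $\|u'\|_{L^2(\R)}^2$ and $\|\nabla \phi_q\|_{L^2(\R^2)}^2 + |q|^2 \|\phi_q\|_{L^2(\R^2)}^2$, plus a term that grows at least linearly in $|q|^2$. The case $q = 0$ reduces to the standard subcritical NLS energies on $\ell$ and on $\Pi$, both bounded below by classical arguments, so from now on I focus on $q \neq 0$.

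For the line component, the first line of \eqref{gnj}, together with $p<6$ (which forces $\tfrac{p}{2}-1<2$), gives via Young's inequality $\tfrac{1}{p}\|u\|_{L^p(\R)}^p \leq \tfrac{1}{8}\|u'\|_{L^2(\R)}^2 + C_1$. For the point-interaction term $\tfrac{\alpha}{2}|u(0)|^2$, potentially negative when $\alpha<0$, I would use the one-dimensional trace inequality $|u(0)|^2 \leq 2\|u\|_{L^2(\R)} \|u'\|_{L^2(\R)} \leq 2\sqrt{\mu}\,\|u'\|_{L^2(\R)}$ and then Young, obtaining $\tfrac{|\alpha|}{2}|u(0)|^2 \leq \tfrac{1}{8}\|u'\|_{L^2(\R)}^2 + C_2$. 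The coupling $-\beta \Re(\overline{q} u(0))$ is controlled by $\beta |q||u(0)| \leq \tfrac{\beta}{2}|q|^2 + \tfrac{\beta}{2}|u(0)|^2$, the $|u(0)|^2$ piece being reabsorbed exactly as before.

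For the planar component I invoke Proposition \ref{prop:e2q}, which isolates the positive quadratic form $\tfrac{1}{2}\|\nabla\phi_q\|_{L^2(\R^2)}^2 + \tfrac{|q|^2}{2}\|\phi_q\|_{L^2(\R^2)}^2$ and produces the explicit logarithmic term $\tfrac{|q|^2 \log|q|}{4\pi}$. The negative piece $-\tfrac{|q|^2}{2}\|v\|_{L^2(\R^2)}^2$ is bounded below by $-\tfrac{\mu}{2}|q|^2$ via the mass constraint, and the second line of \eqref{gnj}, together with $r<4$ (so $r-2<2$), absorbs $\tfrac{1}{r}\|v\|_{L^r(\R^2)}^r$ into $\tfrac{1}{4}\|\nabla\phi_q\|_{L^2(\R^2)}^2$ plus a small constant multiple of $|q|^2$, plus a constant.

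Summing all the estimates, $E(U)$ is bounded below by a quantity of the form $c\|u'\|_{L^2(\R)}^2 + c\|\nabla\phi_q\|_{L^2(\R^2)}^2 + \left(c_0 + \tfrac{\log|q|}{4\pi}\right)|q|^2 + C$ for some $c>0$ and some $c_0, C \in \R$ depending only on $\mu,\alpha,\sigma,\beta,p,r$. The main obstacle, and the only genuinely non-classical point, is the logarithmic coefficient: one has to check that $x \mapsto \left(c_0 + \tfrac{\log x}{4\pi}\right)x^2$ is bounded below on $(0,\infty)$. This is indeed the case, since $x^2 \log x \to 0$ as $x\to 0^+$ and $x^2\log x$ eventually dominates any linear multiple of $x^2$ as $x\to\infty$; thus the coefficient of $|q|^2$, viewed as a function of $|q|$, attains a finite minimum. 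This yields the required uniform lower bound for $E$ on $\D^\mu$.
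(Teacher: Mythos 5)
Your proof is correct and follows essentially the same route as the paper: it combines the decomposition of $E_\sigma$ from Proposition \ref{prop:e2q} with the Gagliardo--Nirenberg bounds \eqref{gnj}, separates the contributions of $\|u'\|_{L^2(\R)}$, $\|\nabla\phi_q\|_{L^2(\R^2)}$ and $|q|$, and closes by noting that $x\mapsto \bigl(c_0+\tfrac{\log x}{4\pi}\bigr)x^2$ is bounded below on $(0,\infty)$, which is exactly the role of the functions $f,g,h$ in \eqref{fgh}. The only difference is cosmetic: you make the absorption explicit via Young's inequality and the trace bound $|u(0)|^2\leq 2\sqrt{\mu}\,\|u'\|_{L^2(\R)}$, where the paper simply records the resulting one-variable lower-bounded functions.
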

 The proof given in \cite{ABCT-24} for the hybrid plane $\I$ can be closely repeated for $\J$. Here we recall the main passage. After estimating the coupling term by
$$ - \beta \Re \, (\overline q u (0) ) \ \geq \ - C (|q|^2 + \sqrt \mu \| u' \|_{L^2 (\R)} ), $$
through \eqref{gnj} we have 
\begin{equation} \label{lowerboundenergy}
\begin{split}
E (U)  \geq & \, \frac{1}{2}\|u'\|_{L^2 (\R)}^2-C \left(\| u' \|_{L^2 (\R)}^{\frac p 2 -1}+ \| u' \|_{L^2 (\R)}\right) + 
    \frac{1}{2}\| \nabla \phi_{q} \|^2_{L^2 (\R^2)} \\[.2cm]   & \,  -C
    \| \nabla \phi_{q} \|^{r-2}_{L^2 (\R^2)}
 + \frac{|q|^2}{2} (\log |q | - C) - C{|q|^{r-2}},
\end{split}
\end{equation}
where the mass was absorbed in the constants and the contributions of $\| u' \|_{L^2 (\R)}, \| \nabla
\phi_q \|_{L^2 (\R^2)}$, and $q$ are separated from one another, so that one can introduce three functions $f,g,h$ that are
lower bounded due to the subcritical character of the
nonlinearities. Then one gets
\begin{equation} \label{fgh}
E (U) \ \geq \ f ( \| u' \|_{L^2 (\R)}) + g
( \| \nabla \phi_q \|_{L^2 (\R^2)}) + h (|q|),
\end{equation}
and it
follows that the constrained energy is lower bounded too. 

\subsection{Minimizing sequences}
Let $\{U_n \} \subset D^\mu$ be a minimizing sequence for the 
energy $E$ at mass $\mu$. This means that every element of the sequence has mass $\mu$ and that
$$ \lim E (U_n) = \E (\mu). $$
We aim at singling out the Ground States as limits of the minimizing sequences.

Writing $U_n = (u_n,v_n)$
and
considering the decomposition
\begin{equation}
    \label{decqn}
   v_n (x) = \phi_{n,q_n} (x) + q_n 
   \frac{K_0
   ( |q_n| x) } {2 \pi} 
\end{equation}
by \eqref{fgh} one has
$$E (U_n) \geq f ( \| u_n' \|_{L^2 (\R)})
+ g ( \| \nabla \phi_{n,q_n} \|) + h (|q_n|).
$$
Now, since $E(U_n)$ is a convergent real
sequence, it has to be bounded, so none of the three terms can diverge, which implies that
the three quantities $ \| u_n' \|_{L^2 (\R)},
 \| \nabla \phi_{n,q_n} \|, |q_n|$ are bounded too. As a consequence, $u_n$ is a bounded
 sequence in $H^1 (\R)$, $\phi_{n,q_n}$ is a bounded sequence in $H^1 (\R^2)$, $q_n$ is a numerical bounded sequence. 
 
 \begin{remark} \label{convergenze}
 Therefore, up to
 subsequences,
 \begin{itemize}
     \item By Banach-Alaoglu theorem $u_n$ converges to some $u$ weakly  in $H^1 (\R)$. 
     \item $q_n$ converges to some $q$ in $\C$.
     \item $q_n\frac{K_0
   ( |q_n| x) } {2 \pi}$ converges strongly to
   $q   \frac{K_0
   ( |q| x) } {2 \pi} $ in $L^p (\R^2)$ for $2 \leq p < \infty$ if $q \neq 0$. It converges to zero weakly in $L^2 (\R^2)$ if
   $q = 0$.
   
      \item By Banach-Alaoglu theorem 
      $\phi_{n,q_n}$ converges to some $\phi_q$ weakly  in $H^1 (\R^2)$. From this one can easily see
      that $\phi_n$, that appears in the decompositon \eqref{energydom2d} of $v_n$, converges
      weakly in $H^1 (\R^2)$ to the function $\phi$ that
      decomposes $v$ according to \eqref{energydom2d}, namely
      $$v = \phi + q \frac{K_0}{2\pi}.$$
      \item $v_n$ converges weakly to some $v$ in $L^p (\R^2)$, $2 \leq p < \infty$.
      \end{itemize}
      \end{remark}
As a consequence, $U_n$ converges weakly to 
$U$ in $L^2 (\J)$. Now, $U$ may not belong to the
minimizing domain $\D^\mu$ since, as a weak limit, its mass could be strictly smaller 
than $\mu.$ However, it turns out that, if the mass
is conserved in the weak limit so that $U \in \D^\mu$,
then $U$ is a Ground State.
\begin{proposition}
\label{massconserved}
Let us consider a minimizing sequence $\{ U_n \} \subset \D^\mu$, and denote by $U$ a weak limit of it in the
sense specified by Remark \ref{convergenze}.

If $\| U \|_{L^2 (\J)}^2 = \mu,$ then $U$ is a Ground State at mass $\mu$
for the energy functional $E$.
\end{proposition}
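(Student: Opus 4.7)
The plan is to prove $E(U)\leq\liminf_n E(U_n)=\E(\mu)$: since the hypothesis $\|U\|_{L^2(\J)}^2=\mu$ gives $U\in\D^\mu$, the opposite bound $E(U)\geq\E(\mu)$ follows from the very definition of $\E(\mu)$, and equality then says exactly that $U$ is a ground state.

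I would split $E$ into three groups of contributions and treat each one along the convergences of Remark~\ref{convergenze}. The purely quadratic differential pieces $\tfrac12\|u'\|_{L^2(\R)}^2$ and $\tfrac12\|\phi\|_{H^1(\R^2)}^2$ are weakly lower semicontinuous. The point-type pieces $\tfrac\alpha2|u(0)|^2$, $\tfrac\sigma2|q|^2$ and $-\beta\Re(\overline q\,u(0))$ pass to the limit by continuity: $q_n\to q$ in $\C$, and $u\mapsto u(0)$ is a continuous linear functional on $H^1(\R)$ via the Sobolev embedding $H^1(\R)\hookrightarrow C_b(\R)$, so $u_n(0)\to u(0)$. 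The remaining $L^2$-mass term $-\tfrac12\|v\|_{L^2(\R^2)}^2$ and the $L^p$, $L^r$ nonlinearities instead need strong convergence, to be produced from the mass-conservation hypothesis.

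To extract such strong convergence, I would combine $\mu=\|u_n\|_{L^2(\R)}^2+\|v_n\|_{L^2(\R^2)}^2$ with the weak lower semicontinuity inequalities $\|u\|_{L^2(\R)}^2\leq\liminf\|u_n\|_{L^2(\R)}^2$ and $\|v\|_{L^2(\R^2)}^2\leq\liminf\|v_n\|_{L^2(\R^2)}^2$; together with the assumption $\|U\|_{L^2(\J)}^2=\mu$, these force (up to a subsequence) $\|u_n\|_{L^2(\R)}\to\|u\|_{L^2(\R)}$ and $\|v_n\|_{L^2(\R^2)}\to\|v\|_{L^2(\R^2)}$. In a Hilbert space, weak convergence plus norm convergence implies strong convergence, hence $u_n\to u$ in $L^2(\R)$ and $v_n\to v$ in $L^2(\R^2)$. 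Since $v_n-v=(\phi_n-\phi)+(q_n-q)K_0/(2\pi)$ and $K_0\in L^s(\R^2)$ for every finite $s$, the singular part converges strongly in every such $L^s$, so $\phi_n\to\phi$ strongly in $L^2(\R^2)$ as well. Interpolating these strong $L^2$ convergences with the $H^1$ bounds on $u_n$ and $\phi_n$ through the Gagliardo--Nirenberg estimates of Proposition~\ref{prop:gnj}, one gets $u_n\to u$ in every $L^p(\R)$ and $\phi_n\to\phi$ in every $L^s(\R^2)$ for $p,s\in[2,\infty)$, and therefore $v_n\to v$ in $L^r(\R^2)$. In turn $\|u_n\|_{L^p(\R)}^p\to\|u\|_{L^p(\R)}^p$ and $\|v_n\|_{L^r(\R^2)}^r\to\|v\|_{L^r(\R^2)}^r$, and putting everything together yields $E(U)\leq\liminf_n E(U_n)$, as wanted.

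The main obstacle I anticipate is precisely the singular nature of the planar component, which prevents a direct compactness argument on $v_n$ in $H^1(\R^2)$. The way around it is to carry out the analysis on the regular part $\phi_n$, exploiting that $K_0$ belongs to every $L^s$ with $s<\infty$, so that the singular contribution converges strongly by the sole continuity $q_n\to q$. For the same reason, I would work with the original expression \eqref{e} of $E$ rather than the reformulation of Proposition~\ref{prop:e2q}, which would require treating the term $\tfrac{|q|^2}{4\pi}\log|q|$ separately in the boundary case $q=0$.
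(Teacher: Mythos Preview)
Your proof is correct and follows essentially the same route as the paper's: mass conservation upgrades weak $L^2$ convergence to strong $L^2$ convergence, Gagliardo--Nirenberg then yields strong convergence of the nonlinear terms, and weak lower semicontinuity of the kinetic pieces closes the argument. The only cosmetic difference is that the paper gets strong $L^2(\J)$ convergence in one line via $\|U_n-U\|_{L^2(\J)}^2=2\mu-2\Re(U_n,U)_{L^2(\J)}\to 0$, whereas you split into the $u$- and $v$-components first; and the paper packages the final step as the sandwich $0\geq \E(\mu)-E(U)=\tfrac12\bigl(\lim(\|u_n'\|^2+\|\nabla\phi_n\|^2)-\|u'\|^2-\|\nabla\phi\|^2\bigr)\geq 0$, while you phrase it as $E(U)\leq\liminf E(U_n)$ combined with $E(U)\geq\E(\mu)$.
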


\begin{proof}
By weak convergence,
\begin{equation}
    \label{strongU}
    \| U_n - U \|_{L^2 (\J)}^2 = 2 \mu - 2 \Re (U_n,U)_{L^2 (\J)} \to 0,
\end{equation}
so conservation of mass in the weak limit ensures strong
convergence. This immediately entails that $u_n$, $v_n$ and $\phi_n$
converge strongly to $u$, $v$ and $\phi$ in the respective
$L^2$ spaces, while, as already seen, $q_n$ converges to $q$. Now, since Banach-Steinhaus' Theorem
guarantees boundedness in $L^2$ for the weakly convergent sequences $\{ u'_n \}, \ \{ \nabla \phi_n \},$ then  
by Gagliardo-Nirenberg estimates one gets
\begin{equation}
    \label{strongu}
    \begin{split}
\| u_n - u \|_{L^p (\R)}^p \ \leq & \ C \| u_n' - u' \|_{L^2 (\R)}^{\frac p 2 - 1} \| u_n - u \|_{L^2 (\R)}^{\frac p 2 + 1} 
\end{split}
\end{equation}
thus $u_n$ converges strongly to $u$ in $L^p (\R)$. On the other
hand 
\begin{equation}
    \label{strongu}
    \begin{split}
\| v_n - v \|_{L^r (\R^2)}^r \ \leq & \ C 
\| \phi_n - \phi \|_{L^r (\R^2)}^r + C |q_n -q |^r \\
\leq & \ C \| \phi_n - \phi \|_{L^2 (\R^2)}^2 \| \nabla \phi_n 
- \nabla \phi \|_{L^2 (\R^2)}^{r-2} + C |q_n - q|^r
\end{split}
\end{equation}
thus $v_n$ 
converges strongly to  $v$ in $L^r (\R^2)$. Therefore, the nonlinear terms in $E (U_n)$ converge to the corresponding terms in $E (U)$. 
All other terms converge except the two kinetic terms
$\frac 1 2 \| u_n' \|^2_{L^2 (\R)}$ and 
$\frac 1 2 \| \nabla \phi_n \|^2_{L^2 (\R^2)}$.
Then, since $U \in D^\mu$,
one has the following double inequality
\begin{equation}
    \label{inequality} \begin{split}
0 \ \geq \ & \E (\mu) - E (U) \ = \  \lim E (U_n) - E (U) \\
= \ & \frac 1 2 (\lim (\| u_n '\|_{L^2 (\R)}^2 +  \| \nabla \phi_n \|_{L^2 (\R^2)}^2 )- \| u \|_{L^2 (\R)}^2
- \| \nabla \phi \|_{L^2 (\R^2)}^2 )\\ 
    \geq \ & 0
\end{split}
\end{equation}
by weak convergence.
\ Then
$ \E (\mu) \ = \ E (U)$
so $U$ is a Ground State at mass $\mu$ and the proof is 
complete.

       \end{proof}
       Now it is necessary to give conditions that guarantee that
       the weak limit of a minimizing sequence preserves the
       mass, so to apply Proposition \ref{massconserved}. It turns out that there is a simple necessary and sufficient condition.

\begin{theorem} \label{thm:key}
A Ground State at mass $\mu$ for the energy functional $E (U)$ defined in \eqref{e} 
exists if and only if there is a function $U \in \D^\mu$ such that
\begin{equation} \label{ineq:key}
 E (U) \ \leq \ \mathcal E_{0,\ell} (\mu). 
\end{equation}
\end{theorem}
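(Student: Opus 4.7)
The statement is an if-and-only-if whose forward direction reduces to a direct test-function comparison on the line, while the reverse direction requires a concentration--compactness scheme anchored at Proposition \ref{massconserved}. The principal obstacle will be handling the loss of mass at infinity in a minimizing sequence, a step complicated by the nonstandard planar decomposition \eqref{decq}.

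\emph{Necessity.} Suppose a Ground State $U^*\in\D^\mu$ exists, so $E(U^*)=\E(\mu)$. To certify $\E(\mu)\leq\E_{0,\ell}(\mu)$ I would test the infimum with the family $U_\lambda:=(\varphi_\mu(\cdot-\lambda),0)\in\D^\mu$, where $\varphi_\mu$ is the soliton of \eqref{soliton}. The $\alpha$-term $\tfrac{\alpha}{2}|\varphi_\mu(\lambda)|^2$ vanishes as $\lambda\to\infty$ and the planar and coupling terms are identically zero, so $E(U_\lambda)\to E_0(\varphi_\mu,\ell)=\E_{0,\ell}(\mu)$. Hence $\E(\mu)\leq\E_{0,\ell}(\mu)$, and $U^*$ itself witnesses the required inequality.

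\emph{Sufficiency.} Assume $U_0\in\D^\mu$ satisfies $E(U_0)\leq\E_{0,\ell}(\mu)$, hence $\E(\mu)\leq\E_{0,\ell}(\mu)$, and take a minimizing sequence $\{U_n\}\subset\D^\mu$. Proposition \ref{prop:lowerb} together with the separated bounds \eqref{fgh} forces $\|u_n'\|_{L^2(\R)}$, $\|\nabla\phi_{n,q_n}\|_{L^2(\R^2)}$, and $|q_n|$ to be uniformly bounded, so Remark \ref{convergenze} supplies a weak limit $\tilde U\in\D$ with mass $m:=\|\tilde U\|_{L^2(\J)}^2\leq\mu$. By Proposition \ref{massconserved} it suffices to exclude $m<\mu$, so I argue by contradiction and suppose $m<\mu$. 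Smooth cutoffs separating a neighbourhood of the junction from the two ends of $\ell$ and from the asymptotic region of $\Pi$, together with the crucial fact that the singular part $qK_0/(2\pi)$ is concentrated at the origin, implement a Brezis--Lieb-type decomposition $U_n=\tilde U+W_n+o(1)$ where $W_n\rightharpoonup 0$ carries mass tending to $\mu-m>0$ and lies in ordinary Sobolev spaces away from the junction. A further extraction splits the escape into a line part of mass $m_1$ and a planar part of mass $m_2$ with $m_1+m_2=\mu-m$, and because the translated remainders no longer feel the $\alpha$-, $\sigma$-, or $\beta$-interactions in the limit, I would obtain
\begin{equation*}
\E(\mu)\ \geq\ \E(m)+\E_{0,\ell}(m_1)+\E_{\infty,\Pi}(m_2).
\end{equation*}

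Pairing this lower bound with the companion upper bounds $\E(\mu)\leq\E(m)+\E_{0,\ell}(\mu-m)$ and $\E(\mu)\leq\E(m)+\E_{\infty,\Pi}(\mu-m)$, which come from glueing a near-minimizer at mass $m$ to a translated free soliton of mass $\mu-m$ on $\ell$ or on $\Pi$, and exploiting the strict sub-additivity of both $\E_{0,\ell}(\nu)=-\theta_p\nu^{(p+2)/(6-p)}$ and $\E_{\infty,\Pi}(\nu)=-\theta_r\nu^{2/(4-r)}$ (power laws with exponents strictly greater than $1$) together with the standing hypothesis $\E(\mu)\leq\E_{0,\ell}(\mu)$, I expect to derive a contradiction with $m_1+m_2=\mu-m>0$. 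Hence $m=\mu$ and Proposition \ref{massconserved} delivers the Ground State. The main obstacle is precisely this Brezis--Lieb-type splitting on $\J$: the planar decomposition \eqref{decq} must be organized so that the logarithmic $K_0$-singularity rides with the core piece $\tilde U$ while the escaping remainder $W_n$ sits in genuine $H^1$ pieces, so that only the standard NLS escape energies $\E_{0,\ell}$ and $\E_{\infty,\Pi}$ appear in the final estimate and the strict sub-additivity argument can be closed.
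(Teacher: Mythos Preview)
Your necessity direction is correct. The sufficiency direction has a genuine gap: you attempt to show $m=\mu$ for \emph{every} minimizing sequence by contradiction, but this is false and the contradiction cannot close. Concretely, if $\E(\mu)=\E_{0,\ell}(\mu)$ (which the hypothesis permits) then $U_n=(\varphi_\mu(\cdot-n),0)$ is minimizing with weak limit zero, so $m=0<\mu$; your lower bound yields only $\E(\mu)\geq\E_{0,\ell}(\mu)$, hence equality, not a contradiction. The same occurs for planar runaway. The paper's route (sketched after the statement and carried out in \cite{ABCT-24}) does not try to force $m=\mu$: for planar escape it observes that the centred pair $(0,\xi_\mu)\in\D^\mu$ already attains $\E_{\infty,\Pi}(\mu)$ and is therefore a Ground State; for line escape it observes that the test function $U_0$ of the hypothesis then satisfies $E(U_0)\leq\E_{0,\ell}(\mu)=\E(\mu)$ and is itself the Ground State. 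This asymmetry---the plane is always harmless because its escape level is attained inside $\D^\mu$, while the line is the only genuine obstruction---is exactly what the theorem encodes, and your contradiction scheme obscures it.

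A secondary issue: in the dichotomy case your combination of the lower bound with the two upper bounds and the strict subadditivity of $\E_{0,\ell}$, $\E_{\infty,\Pi}$ does not close when both $m_1,m_2>0$, since these are two different power laws and cannot be compared without knowing $p,r,\mu$. The clean argument uses strict subadditivity of $\E$ on $\J$ itself, together with the testing inequality $\E(\nu)\leq\min\{\E_{0,\ell}(\nu),\E_{\infty,\Pi}(\nu)\}$ for all $\nu>0$, so that the escape energies dominate $\E(m_1)+\E(m_2)\geq\E(\mu-m)$ and the standard dichotomy contradiction $\E(\mu)\geq\E(m)+\E(\mu-m)>\E(\mu)$ follows.
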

Such a result is relevant for two reasons: first, it
provides a  practical criterion for proving the existence of a Ground State by exhibiting a function $U$ that fulfils \eqref{ineq:key}; second, it states that the infimum of the constrained energy
can be reached in two ways only: either by constructing a Ground
State or by escaping at infinity through the line. This alternative greatly simplifies the options foreseen by the concentration-compactness theory. Let us further examine this point.

According to the concentration-compactness theory (see e.g. \cite{C-03}), a weakly convergent sequence can preserve its mass or lose it. Furthermore, the loss can be partial or complete. For
the present model, we proved in Proposition \ref{massconserved} that if for a given minimizing
sequence there is no loss of mass,
then the sequence converges to a Ground State. One can show that the partial loss of mass is never
energetically convenient due to the subadditivity of the
energy, so that it never applies to
minimizing sequences (see the proof of Theorem 1 in
\cite{ABCT-24}). The complete loss of mass can happen
through three mechanisms: dispersion of the sequence, with
vanishing $\| u_n \|_{L^p (\R)}$ and $\| v_n \|_{L^r (\R^2)}$, escape at infinity through the plane or escape at infinity through the line. It turns out that (\cite{ABCT-24}):
\begin{enumerate}
    \item 
if among the three mechanisms involved by
the complete loss of mass the most convenient is the
dispersion, then  
$$E (U_n) \to E_{\rm{lin}} (\Psi_\mu) = 
E (\Psi_\mu) + \frac 1 p \| \psi_\mu \|^p_{L^p (\R)} + \frac 1 r 
\| \varsigma_\mu \|^r_{L^r (\R^2)} ,
$$
 with $\Psi_\mu = (\psi_\mu, \varsigma_\mu)$ the  Ground State at mass $\mu$ of the quadratic part $E_{\rm{lin}}$ of $E.$
Therefore, $\E (\mu) = E_{\rm{lin}} (\Psi_\mu)$ and
$$
\E (\mu ) \ \leq \ E (\Psi_\mu) \ = \  E_{\rm{lin}}
(\Psi_\mu) - \frac 1 p \| \psi_\mu \|^p_{L^p (\R)} - \frac 1 r 
\| \varsigma_\mu \|^r_{L^r (\R^2)} \ < \ \E (\mu), $$
which is absurd. So there cannot be complete loss of mass
by dispersion;
\item if the most convenient mechanism is the escape at infinity through the plane, then
$E (U_n) \to E_\infty (\xi_\mu, \Pi)$, where $\xi_\mu$ is the soliton of the standard nonlinear Schr\"odinger equation in dimension two. But in this case
one has $\Xi_\mu = (0, \xi_\mu) \in \D^\mu$ and so $\Xi_\mu$ is a Ground State and one proves existence.
\item If, finally, the most convenient mechanism is the escape at infinity through the line, then
$E (U_n) \to E_0 (\varphi_\mu, \ell)$. Of course the state
$\Phi_\mu = (\varphi_\mu, 0)$ belongs to $\D^\mu$, but
$$ E (\Phi_\mu) \ = \ E_0 (\varphi_\mu, \ell) + \frac \alpha 2 | \varphi_\mu (0)|^2,$$
thus if $\alpha > 0$, then the escape at infinity is a convenient choice for the soliton.
\end{enumerate}
The previous discussion is aimed at clarifying why the only
hindrance to the existence of a Ground State is the possibility,
for a minimizing sequence, to run to infinity through the line.
In other words, the plane is not dangerous since it can always host the two-dimensional soliton centered at the origin, equalling the 
optimal energy $E_\infty (\xi_\mu, \Pi)$ reached when escaping through the plane.
In fact, it has been proved in \cite{ABCT-22} that on the plane it
is possible to make better: for every $\sigma \in \R$ there exist
a Ground State at mass $\mu$ whose energy is strictly less than
$E_\infty (\xi_\mu, \Pi)$. Then one can say that the plane is strictly
 attractive, regardless of the sign of $\sigma$. An analogous
result for the analogous three-dimensional problem
was proved in \cite{ABCT-22bis}.

Gathering together the previous comments, one ends up with
the following
\begin{corollary} \label{thm:convenient}
Inequality \eqref{ineq:key} and therefore the existence of a Ground State at mass $\mu$ for 
the energy functional \eqref{e} is guaranteed by one of the
following conditions:
\begin{enumerate}
\item The contact interaction between the plane and the line
is non-repulsive on the line, i.e. $\alpha \leq 0$. This entails that a soliton at the infinity on the line is not less energetic than
the same soliton centred at the origin, and therefore the escape 
at infinity along the line is not strictly convenient.
\item Given the mass $\mu$, the nonlinearity powers $p$ and $r$ are such that the plane is energetically more convenient than the line, in the sense that
$$ E_\sigma (\varsigma_\mu, \Pi) \ \leq \ E_\alpha (\varphi_\mu,
\ell). $$
More precisely, this is accomplished by the following  condition:
there exists a threshold value $\mu^\star$ for the mass, that depends on $p$ and $r$, such that
\begin{equation} \begin{split}
{\rm{if \ \,}} \mu > \mu^\star & \quad {\rm and} \quad {\frac 2 {4-r}} \ > \ {\frac{p+2}{6-p}} \\ or \\
{\rm{if \ \,}} \mu < \mu^\star & \quad {\rm and} \quad {\frac 2 {4-r}} \ < \ {\frac{p+2}{6-p}}, 
\end{split} \end{equation}
then there is a Ground State at mass $\mu$ for \eqref{e}.

\item The contact interaction between the plane and the line is attractive enough on the plane, which means that
$\sigma$ lies below a threshold $\sigma^\star$. Indeed, since
$\E_{\sigma,\Pi}$ is a monotonically increasing
function of $\sigma$ and
$$ \E_{\sigma^\star,\Pi} (\mu) \to - \infty, \qquad \sigma \to - \infty, $$
then either
$$ \E_{+\infty,\Pi} (\mu) \ < \ \E_{0, \ell} (\mu)$$
or
there exists $\sigma^\star \in \R$
such that
$$\E_{\sigma^\star,\Pi} \ = \ \E_{0, \ell} (\mu),$$
Then, for $\sigma \leq \sigma^\star$ possibly infinite, there exists a Ground State.

\item The junction energy $E_{\mathcal J}$ is strong enough, namely $\beta \geq \beta^\star$ for a certain threshold $\beta^\star$.
In this case, $E_\J$ is a monotonically decreasing function of
$\beta$, so one has
$$\E (\mu) \to - \infty, \qquad \beta \to + \infty$$
and then the result immediately follows. 
\end{enumerate}

\end{corollary}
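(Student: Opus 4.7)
My plan is to apply Theorem~\ref{thm:key} and, for each of the four items, exhibit an explicit $U\in\D^\mu$ satisfying $E(U)\le\mathcal{E}_{0,\ell}(\mu)$. Thanks to \eqref{inftre} and the explicit form of the one-dimensional soliton recalled in Section~\ref{sec:comments}, items (1) and (2) reduce to evaluating the energy on well-understood trial states, while items (3) and (4) follow from elementary monotonicity properties of $\mathcal{E}(\mu)$ in the parameters.

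For item (1), I would test with $U=(\varphi_\mu,0)$: then $q=0$ and the planar part vanishes, so
\[
E(U)=E_\alpha(\varphi_\mu,\ell)=\mathcal{E}_{0,\ell}(\mu)+\tfrac{\alpha}{2}|\varphi_\mu(0)|^2,
\]
which is at most $\mathcal{E}_{0,\ell}(\mu)$ whenever $\alpha\le 0$. For item (2), I would test with $U=(0,\varsigma_\mu)$, where $\varsigma_\mu$ is the planar Ground State at mass $\mu$ (whose existence for every $\sigma\in\R$ is granted by \cite{ABCT-22}); since $u(0)=0$, one has $E(U)=\mathcal{E}_{\sigma,\Pi}(\mu)$, so the required inequality is precisely the asserted cheapness of the plane over the line. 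To turn this into the explicit dichotomy on $\mu$ stated in (2), one invokes the scaling laws $\mathcal{E}_{0,\ell}(\mu)=-\theta_p\mu^{(p+2)/(6-p)}$ and the two-dimensional NLS scaling $\mathcal{E}_{\infty,\Pi}(\mu)=-\theta_r\mu^{2/(4-r)}$ together with the upper bound $\mathcal{E}_{\sigma,\Pi}(\mu)\le\mathcal{E}_{\infty,\Pi}(\mu)$ (and the strict improvement from \cite{ABCT-22}); the two pure power laws cross at a single mass $\mu^\star$, above (resp. below) which the planar exponent dominates, according to the sign of $2/(4-r)-(p+2)/(6-p)$.

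For items (3) and (4) the argument is purely monotonic. The map $\sigma\mapsto\mathcal{E}_{\sigma,\Pi}(\mu)$ is non-decreasing because, at a fixed admissible $v$, the two energies $E_\sigma$ and $E_{\sigma'}$ differ only by $\tfrac{1}{2}(\sigma-\sigma')|q|^2$, and for any $v\in\D_2$ of mass $\mu$ with $q\ne 0$ one has $E_\sigma(v,\Pi)\to-\infty$ as $\sigma\to-\infty$; continuity then produces the threshold $\sigma^\star\in(-\infty,+\infty]$, after which the trial $U=(0,\varsigma_\mu)$ does the job. Analogously, $\beta\mapsto\mathcal{E}(\mu)$ is non-increasing: evaluating any fixed $U_0\in\D^\mu$ with $\Re(\overline{q_0}u_0(0))>0$ produces an affine, strictly decreasing function of $\beta$ tending to $-\infty$, whence a threshold $\beta^\star$ is obtained.

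The main obstacle I see is item (2): items (1), (3) and (4) are essentially one-line checks once Theorem~\ref{thm:key} is available, but sharpening the planar/linear comparison into the clean dichotomy on $\mu$ requires control of $\mathcal{E}_{\sigma,\Pi}$ at finite $\sigma$ beyond its $\mathcal{E}_{\infty,\Pi}$ upper bound. This rests on the results of \cite{ABCT-22}, where the strict inequality $\mathcal{E}_{\sigma,\Pi}(\mu)<\mathcal{E}_{\infty,\Pi}(\mu)$ and the existence of the planar Ground State are established for every $\sigma\in\R$.
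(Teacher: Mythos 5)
Your proposal is correct and follows essentially the same route as the paper: it invokes Theorem~\ref{thm:key} and, for each item, either exhibits the explicit competitor ($(\varphi_\mu,0)$ for $\alpha\le 0$, $(0,\varsigma_\mu)$ with the planar Ground State from \cite{ABCT-22} for the mass dichotomy via the scaling exponents $\tfrac{2}{4-r}$ vs.\ $\tfrac{p+2}{6-p}$) or uses the monotonicity of the constrained infimum in $\sigma$ and $\beta$ together with its divergence to $-\infty$, exactly as in the paper's inline justifications. No gaps.
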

Notice that the previous Corollary makes more precise the prescriptions
given in Section 1.1 in order to construct a trapping hybrid plane.

We point out that the criterion given at (1) is simpler than the analogous condition given in \cite{ABCT-22} for the hybrid $\I$. 
This reflects the fact that the problem of the existence of Ground States on a halfline with a delta interaction at the origin is
more complicated than the same problem on the line, as 
detailed in \cite{BC-23}.

\subsection{Further results}
We list here for the sake of completeness some results that immediately translate from \cite{ABCT-24} to this context.
\begin{enumerate}
\item A Ground State always exists for small mass.
Indeed, the linear ground state $\Psi_\mu$ is
energetically more convenient than the one-dimensional soliton,
as it scales linearly with the mass while $\E_{0,\ell}$ scales superlinearly. 
\item Fixed $\mu > 0$, if $\alpha$ and $\sigma$ are large enough, while $\beta$ is
small enough, then there is no Ground State. Here we are in fact
denying the hypotheses of existence theorem. Notice that 
the threshold depends on $\mu$.
\item If $\beta = 0$ and there exists a Ground State, then it
is supported either on $\ell$ or on $\Pi$ and it coincides 
with a Ground State of the problem on $\ell$ or on $\Pi$.

\item If $\beta \neq 0$, then every Ground State $U = (u,v)$ has
non-trivial components $u$ and $v$. Moreover, $u \in H^2 (\R \backslash \{0 \}) \cap H^1 (\R)$, while $v = \phi + q \frac{K_0}{2\pi}$ with 
$\phi \in H^2 (\R^2).$ More precisely, $u$ is an even function
made by glueing together two chunks of a soliton, while $v$ is 
a radially symmetric function whose shape is that of a Ground State
for the Nonlinear Schr\"odinger Equation on the plane with a point interaction at the origin. They are connected by the matching
conditions
\begin{equation} \label{bc}
\left\{ \begin{array}{ccc} u' (0+) - u' (0') & = & \alpha u (0)
- \beta q \\ \\ \phi (0) & = & - \beta u (0) + \sigma q
\end{array}
\right.
\end{equation}
\end{enumerate}

\begin{remark}
In our search for Ground States we excluded the cases
$\alpha = \infty$ and $\sigma = \infty$. This was done because
such cases lead to modifications of the functional $E$ and
of the energy domain $\mathcal D$ and including them would make
the formulation of the problem too cumbersome. However, since they are not completely trivial, we quickly summarize the results.

The condition $\alpha = \infty$
amounts to imposing Dirichlet boundary conditions at the
origin of the line. As a consequence,
both the contact interaction on the line and the coupling term vanish. Therefore one has
$$ E (U) \ = \ E_{\infty} (u, \ell) + E_{\sigma} (v, \Pi)
\ = \ E_0 (u, \ell) + E_\sigma (v, \Pi)$$ on the domain
$$ {\mathcal D}_{\rm{dir}} = \{ U = (u,v), \, u \in H^1 (\R) \ {\rm{s.t.}} \ u (0) = 0, \ v \in \mathcal D_2 \}.
$$
The problem reduces then to a competition between $\ell$ and $\Pi$. If $p,r, \sigma$ and $\mu$ are such that the line turns out to be more convenient, then minimizing sequences leave the plane and seek the lowest energy on the line, which is given by the energy
of the soliton that cannot be attained beacuse of the Dirichlet boundary condition at the origin. Then there is no Ground State. 
If vice versa
the chosen parameters make the plane energetically 
convenient with respect to the line, then minimizing sequences 
concentrate on the plane and converge to the two-dimensional
Ground State according to the results of \cite{ABCT-22}.

In the case with $\sigma = \infty$ the line and the plane are 
decoupled too since, as discussed in  \ref{sec:comments},
such conditions entail $q=0$. Therefore minimizing sequences 
concentrate on the most convenient component among $\ell$ and $\Pi$. In the first case, the issue of the existence of a Ground State reduces to that of the existence for $E_\alpha (u, \ell)$. In the second case, a Ground State exists.

If $\alpha = \sigma = \infty$, then the existence of Ground States is determined by the competition between the free line and the free plane.
\end{remark}

Lastly, we stress that the energy functional \eqref{e} does
not exhaust all possible choices for the contact interactions.
In fact, if one starts from the linear case and seeks for all
possible interactions taking place at the junction, then one is
led to consider a nine-parameter family of possible energy functionals. This
family includes several rich dynamics as the one generated by the so-called delta-prime interaction, that on $\ell$ would give
rise to singular phenomena like symmetry breaking bifurcation 
on the Ground State (\cite{AN-13,ANV-13}). We plan to investigate such dynamics in a 
forthcoming study. A further point that we left untouched concerns the uniqueness of Ground States. This is in general a delicate
point that requires ad hoc techniques. For instance, for the analysis on uniqueness on metric graphs instead of hybrids see \cite{DST-20}.

\bigskip
\noindent
{\bf Acknowledgements.}

F.B. and L.T. have been partially supported by the INdAM GNAMPA project 2023 \textquotedblleft Modelli nonlineari in presenza di interazioni puntuali" (CUP E53C22001930001).

R.A., R.C. and L.T. acknowledge that this study was carried out within the project E53D23005450006 \textquotedblleft
Nonlinear dispersive equations in presence of singularities" - funded by European Union - Next Generation EU within the PRIN 2022 program (D.D. 104 - 02/02/2022 Ministero dell’Università e della Ricerca).



\begin{thebibliography}{99}

\bibitem{ABCT-22}
Adami R., Boni F., Carlone R., Tentarelli L.,
Ground states for the planar NLSE with a point defect as minimizers of the constrained energy,
\emph{Calc. Var. Partial Differential Equations} {\bf 61} (2022), no.5, Paper No. 195, 32 pp.

\bibitem{ABCT-22bis}
Adami R., Boni F., Carlone R., Tentarelli L.,
Existence, structure, and robustness of ground states of a NLSE in 3D with a point defect,
\emph{J. Math. Phys.} {\bf 63} (2022), no.7, Paper No. 071501, 16 pp.

\bibitem{ABCT-24}
Adami R., Boni F., Carlone R., Tentarelli L.,
NLS Ground states on a hybrid plane, preprint arXiv:2401.09888 (2024).





\bibitem{AGT-07} Adami R., Golse F, Teta A., 
{Rigorous derivation of the cubic NLS in dimension one},
J. Stat. Phys. {\bf 127} (2007), 1193-1220.

\bibitem{AN-13}
Adami R., Noja D.,
Stability and symmetry-breaking bifurcation for the ground states of a NLS with a $\delta'$ interaction,
\emph{Comm. Math. Phys.} {\bf 318} (2013), no.1, 247--289.

\bibitem{ANV-13}
Adami R., Noja D., Visciglia N.,
Constrained energy minimization and ground states for NLS with point defects.
\emph{Discrete Contin. Dyn. Syst. Ser. B} {\bf 18} (2013), no.5, 1155--1188.

\bibitem{AGHKH-88} 
Albeverio S., Gesztesy F., H{\o}egh-Krohn R., Holden H.,
\emph{Solvable Models in Quantum Mechanics},
Texts and Monographs in Physics, Springer-Verlag, New York, 1988.


\bibitem{CW-95}
Anderson M. H., Ensher J. R., Matthews M. R., Wieman, C. E., Cornell, E. A.,
Observation of Bose-Einstein Condensation in a Dilute Atomic Vapor, Science {\bf 269} (1995) no. 5221, 198--201.


\bibitem{BdOS-15} Benedikter N., de Oliveira G.,
Schlein B., {Quantitative Derivation of the Gross-Pitaevskii Equation}, Comm. Pure Appl. Math. {\bf 68}
(2015), no. 8, 1399--1482.

\bibitem{BF-61} Berezin F.A., Faddeev L.D., A remark on Schr\"odinger's equation with
a singular potential,  Dokl. Akad. Nauk SSSR {\bf 137} (1961), no. 5, 1011--1014.


\bibitem{BBCS-20} Boccato C., Brennecke C., Cenatiempo S.,
Schlein B., Optimal rate for Bose-Einstein Condensation in the
Gross-Pitaevskii regime, Commun. Math. Phys. {\bf 376} (2020), no. 3, 1311--1395.

\bibitem{B-47}
Bogoliubov N.N., On the theory of superfluidity, Izv. Akad. Nauk. USSR {\bf 11} (1947), Engl. Transl. J. Phys. USSR {\bf 11} (1947), 23.

\bibitem{B-24}
Bose S.N.,Plancks Gesetz und Lichtquantenhypothese, Zeitschrift f\"ur Physik {\bf 26} (1924) no.1, 178--181.

\bibitem{BC-23}
Boni F., Carlone R.,
NLS ground states on the half-line with point interactions,
\emph{NoDEA Nonlinear Differential Equations Appl.} {\bf 30} (2023), no.4, Paper No. 51, 23 pp.










\bibitem{CE-11}
Carlone R., Exner P.,
Dynamics of an electron confined to a ``hybrid plane'' and interacting with a magnetic field,
\emph{Rep. Math. Phys.} {\bf 67} (2011), no.2, 211--227.

\bibitem{CP-17}
Carlone R., Posilicano A.,
A quantum hybrid with a thin antenna at the vertex of a wedge,
\emph{Phys. Lett. A} {\bf 381} (2017), no.12, 1076--1080.

\bibitem{C-03}
Cazenave T.
\emph{Semilinear Schr\"odinger Equations},
Courant Lecture Notes 10, American Mathematical Society, Providence, RI, 2003.




\bibitem{K-95}
Davis K.B., Mewes M.-O., Andrews M.R., van Druten N.J.,  Durfee D.S, Kurn D.M., Ketterle W., Bose–Einstein condensation in a gas of sodium atoms, Phys. Rev. Lett. {\bf 75} (1995) no.22, 3969--3973. 





\bibitem{DST-20}
Dovetta S., Serra E., Tilli P.,
Uniqueness and non-uniqueness of prescribed mass NLS ground states on metric graphs,
\emph{Adv. Math.} {\bf 374} (2020), Paper No. 107352, 41 pp.




\bibitem{E-24} Einstein A., Quantentheorie des einatomigen idealen Gases, K\"onigliche Preussische Akademie der Wissenschaften. Sitzungsberichte, 261–267 (1924).

\bibitem{ESY-07} Erd\H{o}s L., Schlein B., Yau H.-T.,
{Derivation of the Gross-Pitaevskii equation for the
dynamics of the Bose-Einstein Condensation}, 

\bibitem{ESY-10} Erd\H{o}s L., Schlein B., Yau H.-T.,
{Derivation of the cubic Nonlinear Schr\"odinger equation from quantum dynamics of many-body systems}, 
Inv. math. {\bf 167} (2007), no. 3, 515--614.

\bibitem{ES-87}
Exner P., $\check{\text{S}}$eba P.,
Quantum motion on a half-line connected to a plane,
\emph{J. Math. Phys.} {\bf 28} (1987), no.2, 386--391.
Erratum: Quantum motion on a half‐line connected to a plane [J. Math. Phys. 28, 386 (1987)],
\emph{J. Math. Phys.} {\bf 28} (1987), no.9, 2254.

\bibitem{ES-88}
Exner P., $\check{\text{S}}$eba P.,
Mathematical models for quantum point-contact spectroscopy,
\emph{Czechoslovak J. Phys. B} {\bf 38} (1988), no.1, 1--11.


\bibitem{FGI-22}
Fukaya N., Georgiev V., Ikeda M., 
On stability and instability of standing waves for 2d-nonlinear Schr\"odinger equations with point interaction,
\emph{J. Differential Equations} {\bf 321} (2022), 258--295.

\bibitem{FJ-08}
Fukuizumi R., Jeanjean L.,
Stability of standing waves for a nonlinear Schr\"odinger equation with a
repulsive Dirac delta potential,
\emph{Discrete Contin. Dyn. Syst.} {\bf 21} (2008), no.1, 121--136.

\bibitem{FOO-08}
Fukuizumi R., Ohta M., Ozawa T.,
Nonlinear Schr\"odinger equation with a point defect,
\emph{Ann. Inst. H. Poincar\'e C Anal. Non Lin\'eaire} {\bf 25} (2008), no.5, 837--845.



\bibitem{GR-66}
Gradshteyn I.S., Ryzhik I.M., Geronimus Y. V., Tseytlin M. Y. , {\em Table of Integrals, Series, and Products}, Jeffrey, Alan (ed.), Translated by Scripta Technica, Inc. (3 ed.) (1966). 

\bibitem{G-61}
Gross E.P.,  Structure of a quantized vortex in boson systems, Il Nuovo Cimento {\bf 20} (1961) no.3, 454--457.



\bibitem{KP-10} Knowles A., Pickl P.,
{Mean-field dynamics singular potential and rate
of convergence}, Commun. Math. Phys. {\bf 298} (2010), 
101--138.

\bibitem{KSS-11}
Kirkpatrick K., Schlein B., Staffilani G.,
{Derivation of the two-dimensional nonlinear Schr\"odinger equation from many-body quantum dynamics}, Amer. J. Math. {\bf 133} (2011), no. 1, 91--130.



\bibitem{SeiringerMerda} Lieb E.H., Seiringer R.,
Solovey J.-P., Yngvason J., {The Mathematics of the Bose
Gas and its Condensation}, Oberwolfach Seminar Series, Vo. 34, 
Birkh\"auser (2005).


\bibitem{MFVK-15} Mishra H.P., Flores A.S., Vassen W., Knoop S., Efficient production of an $87$Rb, $F =2$, 
$m_F = 2$ Bose-Einstein condensate in a hybrid trap, 
Eur. Phys. Jour. D, {\bf 69} (2015) no. 52, 8 pp.

\bibitem{NN-17} Nam P.T., Napi\'orowski M., 
{Bogoliubov correction to the mean-field dynamics of 
interacting bosons}, Adv. in Theor. Math. Phys. {\bf 21}
(2017), no. 3, 683--738.




\bibitem{P-63}
Pitaevskii L.,  Vortex lines in an imperfect Bose gas, Sov. Phys. JETP. {\bf 13} (1961) no.2, 451–-454.


\bibitem{RS-80}
Reed M., Simon B.,
\emph{Methods of modern mathematical physics. I. Functional analysis},
Second edition, Academic Press, Inc., New York,
1980.

\bibitem{RSII-80}
Reed M., Simon B.,
\emph{Methods of modern mathematical physics. II. Fourier Analysis and Self-adjointness},
Second edition, Academic Press, Inc., New York,
1980.


\bibitem{RS-07}
Rodninanski I., Schlein B.,
{Quantum fluctuations and rate of convergence towards mean field dynamics}, Commun. Math. Phys.
{\bf 291} (2009), no. 1, 31--61.


\bibitem{T-23}
Tentarelli L., A general review on the NLS equation with point-concentrated nonlinearity, Commun. Appl. Ind. Math. 14 (2023), no. 1, 62--84.


\bibitem{ZS-71} Zakharov V.E., Shabat A.B., 
Exact theory of two-dimensional self-focusing
and one-dimensional self-modulation of waves 
in nonlinear media, 
Zh. Exp. Teor. Fiz. {\bf 61} (1971), 118--134.

\end{thebibliography}
\end{document}